\newtheorem{theorem}{Theorem}[section]
\newtheorem{lemma}[theorem]{Lemma}
\newtheorem{proposition}[theorem]{Proposition}
\theoremstyle{definition}
\newtheorem{example}[subsection]{Example}
\renewenvironment{proof}{\paragraph{Proof:}}{\hfill$\square$}
\newif\ifshort
\newcommand{\minimize}{\mbox{minimize}}
\newcommand{\argmax}{\mbox{argmax}}
\newcommand{\st}{\mbox{subject to}}
\newcommand{\tf}[1]{\boldsymbol{#1}}
\newcommand{\trueA}{A }
\newcommand{\trueB}{B }
\newcommand{\tfxw}{\tf{\Phi}_{\tf x\tf w}}
\newcommand{\tfxn}{\tf{\Phi}_{\tf x\tf e}}
\newcommand{\tfuw}{\tf{\Phi}_{\tf u\tf w}}
\newcommand{\tfun}{\tf{\Phi}_{\tf u\tf e}}
\newcommand{\rowtf}{\left\{\tfxw,\tfxn,\tfuw,\tfun \right\}}
\newcommand{\fulltf}{\begin{bmatrix} \tfxw &\tfxn  \\ \tfuw &\tfun  \end{bmatrix}}
\newcommand{\vop}{\mbox{vec}}
\newcommand{\vbar}{\overline{\vop}}
\newcommand{\norm}[1]{\lVert #1 \rVert}
\newcommand{\bignorm}[1]{\left\lVert #1 \right\rVert}
\newcommand{\twonorm}[1]{\lVert #1 \rVert_{2}}
\newcommand{\E}{\mathbb{E}}
\newcommand{\statedim}{n}
\newcommand{\inputdim}{m}
\newcommand{\outputdim}{\ell}
\newcommand{\noisedim}{w}
\newcommand{\hinf}{\mathcal{H}_\infty}
\newcommand{\htwo}{\mathcal{H}_2}
\newcommand{\lone}{\mathcal{L}_1}
\newcommand{\iid}{\stackrel{\mathclap{\text{\scriptsize{ \tiny i.i.d.}}}}{\sim}}
\newcommand{\cvectwo}[2]{\begin{bmatrix} #1 \\ #2 \end{bmatrix}}
\newcommand{\rvectwo}[2]{\begin{bmatrix} #1 & #2 \end{bmatrix}}
\newcommand{\R}{\mathbb{R}}
\newcommand{\calA}{\mathcal{A}}
\newcommand{\calN}{\mathcal{N}}
\newcommand{\calX}{\mathcal{X}}
\newcommand{\calS}{\mathcal{S}}
\newcommand{\rvecthree}[3]{\begin{bmatrix} #1 & #2 & #3 \end{bmatrix}}
\newcommand{\eps}{\varepsilon}
\title{Robust Guarantees for Perception-Based Control}
\author{Sarah Dean$^1$, Nikolai Matni$^2$, Benjamin Recht$^1$, and Vickie Ye$^1$\\
$^1$Department of EECS,  University of California, Berkeley\\
$^2$Department of ESE,  University of Pennsylvania\\}
\date{December 2019}
\begin{document}

\maketitle
\vspace{-1em}
\begin{abstract}%
Motivated by vision-based control of autonomous vehicles, we consider the problem of controlling a known linear dynamical system for which partial state information, such as vehicle position, is extracted from complex and nonlinear data, such as a camera image.
Our approach is to use a learned perception map that predicts some linear function of the state and to design a corresponding safe set and robust controller for the closed loop system with this sensing scheme. 
We show that under suitable smoothness assumptions on both the perception map and the generative model relating state to complex and nonlinear data,
parameters of the safe set can be learned via appropriately dense sampling of the state space.
We then prove that the resulting perception-control loop has favorable generalization properties.
We illustrate the usefulness of our approach on a synthetic example and on the self-driving car simulation platform CARLA.
\end{abstract}

{\bf Keywords.} Robust control, learning theory, generalization, perception, robotics.

\section{Introduction}

Incorporating insights from rich, perceptual sensing modalities such as cameras remains a major challenge in controlling complex autonomous systems.
While such sensing systems clearly have the potential to convey more information than simple, single output sensor devices, interpreting and robustly acting upon the high-dimensional data streams remains difficult.
Recent end-to-end approaches tackle the problem of image based control by learning an optimized map from pixel values directly to low level control inputs.
Though there has been tremendous success in accomplishing sophisticated tasks, critical gaps in understanding robustness and safety still remain \citep{levine2016end}.
On the other hand, methods rooted in classical state estimation and robust control explicitly characterize a model of the underlying system and its environment in order to design a feedback controller.
These approaches have provided strong and rigorous guarantees of robustness and safety in domains such as aerospace and process control, but the level of specification of the underlying system required has thus far limited their impact for complex sensor inputs.

In this paper, we aim to leverage contemporary techniques from machine learning and robust control to understand the conditions under which safe and reliable behavior can be achieved in controlling a system with uncertain and high-dimensional sensors.
Whereas much recent work has been devoted to proving safety and performance guarantees for learning-based controllers applied to systems with unknown dynamics~\citep{wabersich2018linear,akametalu2014reachability,ostafew2014learning,hewing2017cautious,dean2017sample,dean2018regret,abbasi2019model,abbasi2011regret,cheng2019end,taylor2019episodic,williams2018information,agarwal2019online,berkenkamp2017safe},
we focus on the practical scenario where the underlying dynamics of a system are well understood, and it is instead the integration of  perceptual sensor data into the control loop that must be learned.
Specifically, we consider controlling a known linear dynamical system for which partial state information can only be extracted from complex observations. 
Our approach is to design a \emph{virtual sensor} by learning both a perception map (i.e., a map from observations to a linear function of the state) and a bound on its estimation errors. 
We show that under suitable smoothness assumptions, we can guarantee bounded errors within a neighborhood of the training data. 
This 
model of uncertainty allows us to synthesize a robust controller that ensures that the system does not deviate too far from states visited during training. 
Finally, we show that the resulting perception and robust control loop is able to robustly generalize under adversarial noise models. 
To the best of our knowledge, this is the first such guarantee for a vision-based control system.

\ifshort
\vspace{-0.5em}
\paragraph{Related Work}
We refer to our full technical report~\citep{full_version} for an extensive treatment of related work, and give only a brief overview here. 
There is a rich body of work that integrates cameras into estimation, planning, and control loops.
This includes techniques that focus on estimation, integrating camera measurements with inertial odometry via an Extended Kalman Filter and through Simultaneous Localization and Mapping~\citep{jones2011visual,lynen2013robust},
which can then be leveraged  to enable aggressive control maneuvers, for example in unmanned aerial vehicles~\citep{tang2018aggressive}. The machine learning community has taken a more data-driven approach.
The earliest such example is likely~\cite{pomerleau1989alvinn}, in which a 3-layer neural-network is trained to infer road direction from images. More recent work tackles low level vision-based control via imitation learning~\citep{codevilla2018end}, resulting in policies that map pixels directly to low-level control inputs.
Inspired by these works, our theoretical contributions are similar in spirit to those of the online learning community, in that we provide generalization guarantees under adversarial noise models~\citep{hassibi2001h,yasini2018worst}. 

\else
\subsection{Related work}
\paragraph{Vision based estimation, planning, and control} There is a rich body of work, spanning several research communities, that integrate complex sensing modalities, specifically cameras, into estimation, planning, and control loops. 
The robotics community has focused mainly on integrating camera measurements with inertial odometry via an Extended Kalman Filter (EKF)~\citep{jones2011visual,kelly2011visual,hesch2014camera}. 
Similar approaches have also been used as part of Simultaneous Localization and Mapping (SLAM) algorithms in both ground ~\citep{lynen2015get} and aerial~\citep{lynen2013robust} vehicles. 
We note that these works focus solely on the estimation component, and do not consider downstream use of state estimates in control loops. 
In contrast, the papers ~\citep{loianno2016estimation,tang2018aggressive,lin2018autonomous} demonstrate techniques that use camera measurements to aid inertial position estimates to enable aggressive control maneuvers in unmanned aerial vehicles.  

The machine learning community has taken a more data-driven approach.
The earliest such example is likely~\citep{pomerleau1989alvinn}, in which a 3-layer neural-network is trained to infer road direction from images. 
Modern approaches to vision based planning, typically relying on deep neural networks, include learning maps from image to trail direction ~\citep{giusti2015machine}, learning Q-functions for indoor navigation using 3D CAD images~\citep{sadeghi2016cad2rl}, and using images to specify waypoints for indoor robotic navigation~\citep{bansal2019combining}. 
Moving from planning to low-level control, end-to-end learning for vision based control has been achieved through imitation learning from training data generated via human~\citep{bojarski2016end} and model predictive control~\citep{pan2018agile}. 
The resulting policies map raw image data directly to low-level control tasks.
In \citet{codevilla2018end}, higher level navigational commands, images, and other sensor measurements are mapped to control actions via imitation learning. 
Similarly, \citet{williams2018information} and related works, image and inertial data is mapped to a cost landscape, that is then optimized via a path integral based sampling algorithm. 
More closely related to our approach is~\citet{lambert2018deep}, where a deep neural network is used to learn a map from image to system state -- we note that this perception module is naturally incorporated into our proposed pipeline. 
To the best of our knowledge, none of the aforementioned results provide safety or performance guarantees.

\paragraph{Learning, robustness, and control} Our theoretical contributions are similar in spirit to those of the online learning community, in that we provide generalization guarantees under adversarial noise models~\citep{anava2013online,anava2015online,kuznetsov2016time,hassibi2001h,yasini2018worst}. 
Similarly, \citet{agarwal2019online} shows that adaptive disturbance feedback control of a linear system under adversarial process noise achieves sublinear regret -- we note that this approach assumes full state information.
We also draw inspiration from recent work that seeks to bridge the gap between linear control and learning theory. 
These assume a linear time invariant system, and derive finite-time guarantees for system identification \citep{dean2017sample,hardt2018gradient,simchowitz2018learning,sarkar2018fast,oymak2018non,sarkar2019finite,tsiamis2019finite,fattahi2018data,fattahi2019learning,pereira2010learning}, and/or integrate learned models into control schemes with finite-time performance guarantees \citep{dean2017sample,abbasi2011regret,russo17,abeille18,ouyang17,abbasi2019model,dean2018regret,cohen2019learning,mania2019certainty,rantzer2018concentration}.  
\fi

\ifshort
\vspace{-0.5em}
\paragraph{Notation}
\else
\subsection{Notation}
\fi

\label{sec:notation}
We use letters such as $x$ and
$A$ to denote vectors and matrices, and boldface letters such as $\tf x$ and $\tf \Phi$ to denote infinite horizon signals and linear convolution operators. Thus, for $\tf y = \tf \Phi \tf x$, we have by definition that $y_k = \sum_{t=0}^k \Phi_t x_{k-t}$.  We write $x_{0:t} = \{x_0, x_1, \dots, x_t\}$ for the history of signal $\tf x$ up to time $t$. For a function $x_k \mapsto f_k(x_k)$, we write $\tf f(\tf x)$ to denote the signal $\{f_k(x_k)\}_{k=0}^\infty$.  
\ifshort
\else

\fi
We overload the norm $\norm{\cdot}$ so that it applies equally to elements $x_k$, signals $\tf x$, and linear operators $\tf \Phi$. 
For any element norm, we define the signal norm as $\norm{\tf x} = \sup_k \norm{x_k}$
and the linear operator norm as $\norm{\tf \Phi} = \sup_{\norm{\tf w} \leq 1} \norm{\tf \Phi \tf w}$. 
We primarily focus on the triple $(\norm{x_k}_\infty, \norm{\tf x}_\infty, \norm{\tf \Phi}_{\mathcal L_1})$. Note that as $\norm{\tf \Phi}$ is an induced norm, it satisfies the sub-multiplicative property $\norm{\tf \Phi \tf \Psi}\leq \norm{\tf \Phi}\norm{\tf \Psi}$. 
\ifshort
\else

\fi
We define the ball $B_r(x_0) = \{x~:~\|x-x_0\|\leq r\}$. 
We say that a fuction $f$ is locally $S$-slope bounded for a radius $r$ around $x_0$ if for
$x \in B_r(x_0)$, $\| f(x) - f(x_0) \| \le S \|x - x_0 \|$, or similarly locally $L$-Lipschitz if for $x,y \in B_r(x_0)$, $\norm{f(x) - f(y)} \le L \norm{x - y}$.

\ifshort
\vspace{-0.5em}
\fi
\section{Problem setting}
\ifshort
\vspace{-0.5em}
\fi
\label{sec:setup}
Consider the LTI dynamical system
\begin{align}
x_{k+1} &= A x_k + B u_k + Hw_k\:, \label{eq:dynamics}\\
z_k &= q(x_k)\label{eq:generative-model}
\end{align}
with system state $x \in \R^\statedim$, control input $u \in \R^\inputdim$, disturbance $w \in \R^\noisedim$, and observation $z\in\R^M$. We take the dynamics matrices $(A,B,H)$ to be known. 
The observation process is determined by the unknown \emph{generative model} $q$, which is nonlinear and potentially quite high-dimensional.
As an example, consider a camera affixed to the dashboard of a car tasked with driving along a road. 
Here, the observations $\{z_k\}$ are the captured images and the map $q$ generates these images as a function of position and velocity.

Motivated by such vision based control systems, our goal is to solve the optimal control problem
\begin{equation}
\begin{array}{rl}
\displaystyle\minimize_{\{\gamma_k\}}&c(\tf x, \tf u)~~\text{subject to} ~~\text{dynamics~\eqref{eq:dynamics} and measurement~\eqref{eq:generative-model}}, \ u_k = \gamma_k(z_{0:k}),
\end{array}
\label{eq:nonlinear-control}
\end{equation}
where $c(\tf x, \tf u)$ is a suitably chosen cost function 
and $\gamma_k$ is a measurable function of the image history $z_{0:k}$.
This problem is made challenging by the  nonlinear, high-dimensional, and unknown generative model.\footnote{
	We remark that a nondeterministic appearance map $q$ may be of interest for modeling phenomena like noise and environmental uncertainty. While we focus our exposition on the deterministic case, we note that many of our results can be extended in a straightforward manner to any noise class for which the perception map has a bounded response. 
	For example, many computer vision algorithms are robust to random Gaussian pixel noise, gamma corrections, or sparse scene occlusions.
	} 

\begin{figure}
\centering
\includegraphics[clip, trim=0cm 11.75cm 0cm 3cm, height=0.15\textheight]{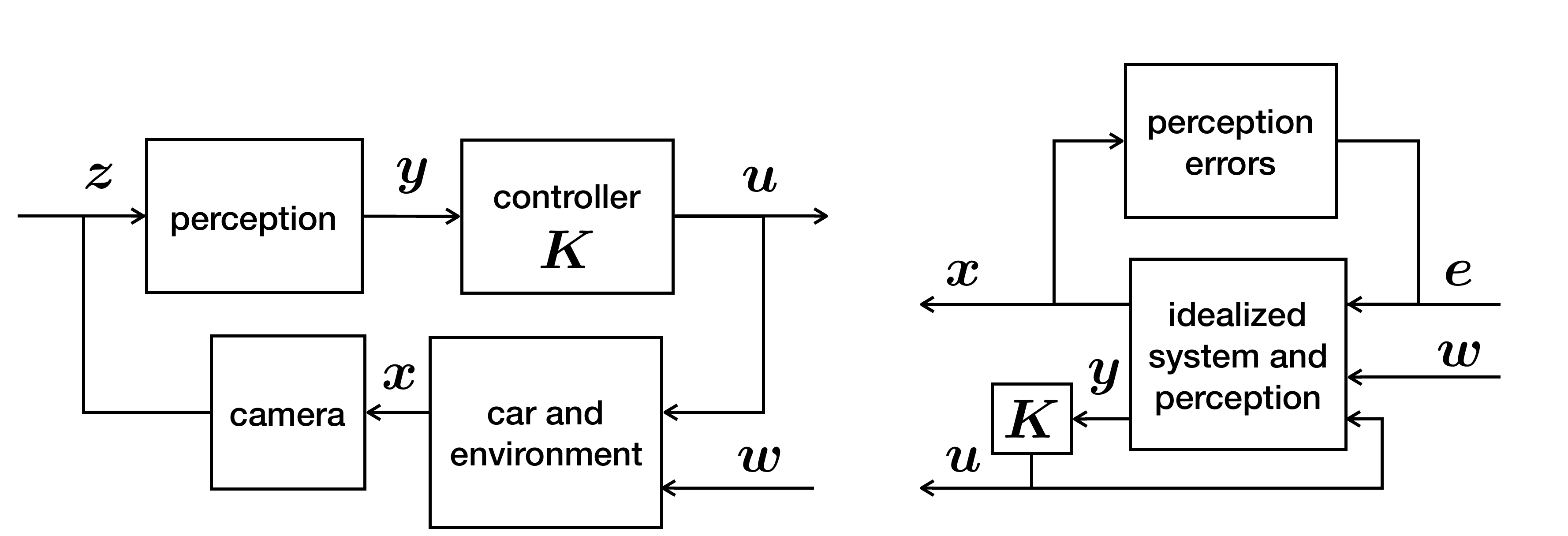}
\caption{(Left) A diagram of the proposed perception-based control pipeline.
    (Right) The conceptual rearrangement of the closed-loop system permitted through our perception error characterization. 
}\label{fig:pipeline}
\ifshort
\vspace{-2em}
\fi
\end{figure}
Suppose instead that there exists a \emph{perception map} $p$ that imperfectly predicts partial state information; that is
$p(z_k) = Cx_k + e_k$ for $C \in \R^{\outputdim \times \statedim}$ a known matrix,
and  error $e_k \in \R^\outputdim$.
Using this map, we define a new measurement model in which the map $p$ plays the role of a noisy sensor:
\begin{equation}
y_k = p(z_k) = Cx_k + e_k.
\label{eq:output-measurement}
\end{equation}  
This allows us to reformulate problem \eqref{eq:nonlinear-control} as a \emph{linear} optimal control problem, where the measurements are defined by~\eqref{eq:output-measurement} and the control law $u_k = \tf K(y_{0:k})$
is a \emph{linear}
function of the outputs of past measurements $y_{0:k}$. 
As illustrated in Figure~\ref{fig:pipeline}, 
guarantees of performance, safety, and robustness require designing a controller which suitably responds to system  disturbance $\tf w$ and sensor noise $\tf e$. 
\ifshort

For linear optimal control problems, a variety of cost functions and noise models have well-understood solutions. 
We give further background on linear optimal control in	the full technical report~\citep{full_version}.
Perhaps the most well known is the combination of \emph{Kalman filtering} with static state feedback, which arises as the solution to the linear quadratic Guassian (LQG) problem.   
However, the perception errors $\tf e$ do not necessarily obey assumptions made in traditional optimal control, and must be handled carefully. 
\else
	For linear optimal control problems, a variety of cost functions and noise models have well-understood solutions; we expand on this observation further in Section \ref{sec:optimal_control_background}.
\fi

In light of this discussion, we approach our problem with the following data-driven procedure:
First, we use supervised learning to fit a perception map and explicitly characterize its errors in terms of the data it was trained on.
Second, we compute a robust controller that mitigates the effects of the measurement error $\tf e$.
We will show that under suitable local smoothness assumptions on the generative model $q$ and perception map $p$ we can guarantee closed-loop robustness using a number of samples depending only on the state dimension.

\ifshort
\else

\subsection{Background on linear optimal control}
\label{sec:optimal_control_background}

We first recall some basic concepts from linear optimal control in the partially observed setting. 
In particular we consider the optimal control problem
\begin{equation}
\begin{array}{rl}
\minimize_{\tf K} & c(\tf x, \tf u)\\
\st & x_{k+1} = Ax_k + Bu_k + Hw_k\\
& y_k = Cx_k + e_k\\
& u_k = \tf K(y_{0:k}),
\end{array}
\label{eq:of-sys}
\end{equation}
for $x_k$ the state, $u_k$ the control input, $w_k$ the process noise, $e_k$ the sensor noise, $\tf K$ a linear-time-invariant operator, and $c(\tf x, \tf u)$ a suitable cost function.

Control design depends on how the disturbance $\tf w$ and measurement error $\tf e$ are modeled, as well as performance objectives.
By modeling the disturbance $\tf w$ and sensor noise $\tf e$ as being drawn from different signal spaces, and by choosing correspondingly suitable cost functions, we can incorporate practical performance, safety, and robustness considerations into the design process. 
For example, the well-studied LQG control problem minimizes the expected quadratic cost 
for user-specified positive definite matrices and normally distributed process and measurement noise. 
The resulting optimal estimation and control policy is to apply a static state feedback controller with the Kalman filter state estimate.
The robust version of this problem considers instead the worst-case quadratic cost for $\ell_2$ or power-norm bounded noise. This is the $\mathcal{H}_\infty$ optimal control problem, which has a rich history \citep{zhou1996robust}.  

As a final example, we recall the $\lone$ optimal control problem, which seeks to minimize the $\ell_\infty \to \ell_\infty$ induced norm of a system.  
This formulation best accommodates real-time safety constraints and actuator saturation, which we motivate in the following example.

\begin{example}[Waypoint Tracking] \label{ex:ref_tracking}
Suppose that the control objective is to keep the system within a bounded distance of a sequence of waypoints at every timestep, and that the distance between sequential waypoints is bounded. Denoting the system state as $\tf\xi$ and the waypoint sequence as $\tf r$, the
 cost function is
\[
c(\tf \xi, \tf u) = \sup_{\substack{\norm{r_{k+1}-r_{k}}_\infty\leq 1,\\
\|e_k\|_\infty\leq 1, k\geq 0}}\bignorm{\begin{matrix}Q^{1/2}(\xi_k-r_k) \\ R^{1/2}u_k\end{matrix}}_\infty\:.
\]
If we specify costs by $Q = \mathrm{diag}{(1/q^2_{i})}$ and $R = \mathrm{diag}{(1/r^2_{i})}$, then as long as the optimal cost is less than 1, we can guarantee bounded tracking error $|\xi_{i,k} - r_{i,k}| \leq q_i$ and actuation $|u_{i,k}| \leq r_i$ for all possible realizations of the waypoint and sensor error processes. 
By defining an augmented state, i.e. $x_k := [\xi_k-r_k; r_k]$, and 
modeling waypoints as bounded disturbances, i.e.
$w_k:=r_{k+1}-r_{k}$, we can reformulate this cost into a standard $\mathcal{L}_1$ optimal control problem. 
\end{example}
In Appendix~\ref{app:optimal_control}, we provide further details on these optimal control problem formulations and solutions.
Returning to the goals of perception-based control, it is
clear that the perception errors $e_k$ will not follow a Gaussian distribution.
Because this invalidates the optimality of the LQG control strategy, we focus on bounded norm assumptions, particularly for the case of $\lone$ control.

 \fi

\ifshort
\vspace{-0.5em}
\fi
\section{Data-dependent perception error}
\ifshort
\vspace{-0.5em}
\fi
\label{sec:perception-error}

In this section, we introduce a procedure to estimate regions for which a learned perception map can be used safely during operation.
We first suppose access to initial training data $\calS_{0}=\{(x_i,z_i)\}_{i=1}^{N_0}$,
used to learn a perception map via any wide variety of traditional supervised methods.
We then estimate safe regions around the training data, potentially using parameters learned from a second dataset.
The result is a characterization of how quickly the learned perception map degrades as we move away from the initial training data.

We will describe the regions of the state space within which the sensing is reliable using a safe set which approximates sub-level sets of the error function $e(x) = p(q(x)) - Cx$.
We make this precise in the following Lemma, defining a safe set which is valid under an assumption that the error function is locally slope bounded around training data. 
\ifshort
We defer the proof, and the proofs of all results to follow, to the full technical report~\citep{full_version}.
\else
\fi

\begin{lemma}[Closeness implies generalization] \label{lem:close_implies_gen}
Suppose that $p\circ q - C$ is locally $S$-slope bounded with a radius of $r$ around training datapoints.
Define the safe set
\begin{align}
\label{eq:safe_set}
\begin{split}
\calX_\gamma = \bigcup\limits_{(x_d,z_d)\in\calS_0}\{x\in B_r(x_d)~:~  ~~ \|p(z_d) - C x_d\| + S\|x-x_d\|\leq \gamma\}\:.
\end{split}
\end{align}
Then for any $(x, z)$ with $x\in\calX_\gamma$, the perception error is bounded:
$\norm{p(z) - Cx} \le \gamma$.
\end{lemma}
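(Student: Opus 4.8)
The plan is to reduce everything to a single application of the triangle inequality, once we recognize that the quantities appearing in the statement are all values of the error function $e(x) := (p \circ q - C)(x) = p(q(x)) - Cx$. First I would record the two bookkeeping identities that tie the notation together through the generative model \eqref{eq:generative-model}. Since any admissible pair $(x,z)$ satisfies $z = q(x)$, the quantity we must bound is exactly $\norm{p(z) - Cx} = \norm{e(x)}$. Likewise, each training pair obeys $z_d = q(x_d)$, so the term $\norm{p(z_d) - Cx_d}$ appearing inside the definition \eqref{eq:safe_set} of $\calX_\gamma$ is precisely $\norm{e(x_d)}$. After this translation, the membership condition defining $\calX_\gamma$ reads $\norm{e(x_d)} + S\norm{x - x_d} \le \gamma$ for some witnessing datapoint.

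With this in hand, the core argument is immediate. Because $x \in \calX_\gamma$ and $\calX_\gamma$ is a \emph{union} over training points, there exists at least one $(x_d, z_d) \in \calS_0$ with $x \in B_r(x_d)$ satisfying $\norm{e(x_d)} + S\norm{x - x_d} \le \gamma$; I would fix this particular datapoint for the remainder of the proof. I then split
\[
\norm{e(x)} \;\le\; \norm{e(x) - e(x_d)} + \norm{e(x_d)}
\]
by the triangle inequality. Since $x$ lies in $B_r(x_d)$ and $p \circ q - C$ is assumed locally $S$-slope bounded with radius $r$ around each training datapoint, the slope bound applies with base point $x_d$ and yields $\norm{e(x) - e(x_d)} \le S\norm{x - x_d}$. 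Substituting gives $\norm{e(x)} \le S\norm{x - x_d} + \norm{e(x_d)} \le \gamma$, which is the desired conclusion.

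I do not anticipate a genuine obstacle here; the result is essentially a definitional consequence of how the safe set was constructed to anticipate this triangle-inequality step. The only points requiring care are notational: confirming that the observation associated with a state is consistent with the generative model so that $p(z) - Cx$ is truly $e(x)$, and ensuring that the slope bound is invoked only within the ball $B_r(x_d)$ where it is valid — which is guaranteed by the explicit constraint $x \in B_r(x_d)$ baked into the definition of $\calX_\gamma$. The union structure is handled simply by selecting one witnessing datapoint, since membership in $\calX_\gamma$ requires the safe-set inequality to hold for at least one term of the union.
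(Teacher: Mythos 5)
Your proposal is correct and follows essentially the same route as the paper's proof: fix a witnessing training point from the union defining $\calX_\gamma$, apply the triangle inequality through $e(x_d)$, invoke the local slope bound inside $B_r(x_d)$, and conclude from the safe-set inequality. The only difference is expository — you make the identification $p(z)-Cx = e(x)$ via $z=q(x)$ explicit, which the paper leaves implicit.
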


\ifshort
\else
\begin{proof}
The proof follows by a simple argument.
For training data point $(x_d, z_d)$ such that $x_{d}\in B_r(x)$,
\begin{align*}
    \norm{p(z) - Cx} &= \norm{p(q( x)) - C x - (p(q( x_d)) - Cx_d) + p(q( x_d)) - Cx_d} \\
    &\le S \norm{ x -  x_d} + \norm{p(q( x_d)) - C x_d}\:.
\end{align*}
The second line follows from the local slope bounded assumption.
Then further choosing $x_d$ to correspond to minimizing over training data in the definition of $\calX_\gamma$, we have
\begin{align*}
    \norm{p( z) - Cx} \le S \norm{x -  x_d} + \norm{p(q( x_d)) - C x_d} \leq \gamma\:.
\end{align*}
\end{proof}
\fi
The validity of the safe set $\calX_\gamma$ depends on bounding the slope of the error function locally around the training data. 
We remark that this notion of slope boundedness has connections to sector bounded nonlinearities, a classic setting for nonlinear system stability analysis~\citep{desoer1975feedback}.

Deriving the slope boundedness of the error function relies on the learned perception map as well as the underlying generative model, so we propose a second learning step to estimate a bound on $S$.
We use an additional dataset composed of samples around each training data point:
$\calS_N=\cup_{x_d\in\calS_0} \calS_{x_d}$,
where each $\calS_{x_d}$ contains $N$ points densely sampled from $B_r(x_d)$.
For each $x_d$ in the training set, we then fix a radius $r$ and estimate $S$ with the maximum
observed slope:
\begin{equation}
\widehat{S}_{x_d} = \max_{x_i \in \calS_{x_d}} s(x_i, x_d)\:,
\quad \textrm{where}~~
s(x, x') = \norm{e(x) - e(x')}/\norm{x - x'}.
\end{equation}
By taking the maximum over training datapoints $x_d$, the slope bound $S$ can be estimated with $|\calS_0|N$ samples.

\begin{proposition}\label{prop:slope_est}
Assume that the error function $p\circ q - C$ is locally $L$-Lipschitz for a radius $r$ 
around a training point $x_d$. 
Further assume that the true maximum slope $S_{x_d} := \max_{x \in B_r(x_d)} s(x, x_d)$ is achieved by a point with distance from $x_d$ greater than $0<\tau\leq r$.
Then for $\widehat S_{x_d}$ estimated with an $\eps$-covering\footnote{We can achieve an $\eps$-cover by gridding with $N=\left(\frac{r}{\eps}\right)^d$
points from $B_r(x_d)$.} of
$B_r(x_d)$,
\[S_{x_d} \leq \widehat{S}_{x_d}(1 + \eps/\tau) + L\eps/\tau \:.\]
\end{proposition}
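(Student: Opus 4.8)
The plan is to relate the true maximum slope $S_{x_d}$, achieved at some point $x^\star \in B_r(x_d)$, to the empirical maximum $\widehat S_{x_d}$ taken over the $\eps$-cover. First I would invoke the covering guarantee: since $\{x_i\}$ forms an $\eps$-cover of $B_r(x_d)$, there is a sample point $x_j$ with $\norm{x^\star - x_j} \le \eps$. The goal is then to show that the slope $s(x_j, x_d)$ measured at this nearby grid point cannot be much smaller than $s(x^\star, x_d)$, so that $\widehat S_{x_d} \ge s(x_j, x_d)$ controls $S_{x_d} = s(x^\star, x_d)$ from below (equivalently, $S_{x_d}$ is bounded above in terms of $\widehat S_{x_d}$).

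The key step is to compare the two difference quotients
\[
s(x^\star, x_d) = \frac{\norm{e(x^\star) - e(x_d)}}{\norm{x^\star - x_d}}, \qquad
s(x_j, x_d) = \frac{\norm{e(x_j) - e(x_d)}}{\norm{x_j - x_d}}.
\]
I would bound the numerator difference by the local $L$-Lipschitz property: $\norm{e(x^\star) - e(x_j)} \le L \norm{x^\star - x_j} \le L\eps$, which gives $\norm{e(x^\star) - e(x_d)} \le \norm{e(x_j) - e(x_d)} + L\eps$ via the triangle inequality. For the denominators, the reverse triangle inequality yields $\bigabs{\norm{x^\star - x_d} - \norm{x_j - x_d}} \le \norm{x^\star - x_j} \le \eps$, so $\norm{x_j - x_d} \ge \norm{x^\star - x_d} - \eps$. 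Combining these, and using that $x^\star$ achieves the maximum at distance $\norm{x^\star - x_d} > \tau$, I would write
\[
S_{x_d} = \frac{\norm{e(x^\star) - e(x_d)}}{\norm{x^\star - x_d}}
\le \frac{\norm{e(x_j) - e(x_d)} + L\eps}{\norm{x^\star - x_d}}.
\]
The idea is to rewrite $\norm{e(x_j) - e(x_d)} = s(x_j, x_d)\,\norm{x_j - x_d} \le \widehat S_{x_d}\,\norm{x_j - x_d}$ and then bound the ratio $\norm{x_j - x_d}/\norm{x^\star - x_d}$. Since $\norm{x_j - x_d} \le \norm{x^\star - x_d} + \eps$, that ratio is at most $1 + \eps/\norm{x^\star - x_d} \le 1 + \eps/\tau$, while the stray $L\eps$ term divided by $\norm{x^\star - x_d} > \tau$ contributes at most $L\eps/\tau$. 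Assembling these pieces gives exactly
\[
S_{x_d} \le \widehat S_{x_d}(1 + \eps/\tau) + L\eps/\tau,
\]
as claimed.

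The main obstacle I anticipate is the careful handling of the denominators and the role of the assumption $\norm{x^\star - x_d} > \tau$: the difference-quotient comparison is delicate because perturbing the evaluation point changes both numerator and denominator simultaneously, and the $\tau$ lower bound is precisely what prevents the ratio $\norm{x_j - x_d}/\norm{x^\star - x_d}$ and the error term $L\eps/\norm{x^\star - x_d}$ from blowing up when the maximizer lies too close to $x_d$. A subtlety to verify is that $x_j$ indeed lies in $B_r(x_d)$ so that the Lipschitz bound applies and $s(x_j, x_d)$ is a legitimate entry in the max defining $\widehat S_{x_d}$; this holds because $x_j$ is a cover point of $B_r(x_d)$. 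The footnote's claim that an $\eps$-cover of $B_r(x_d)$ needs $N = (r/\eps)^d$ grid points is a standard volumetric covering estimate and can be cited rather than reproved.
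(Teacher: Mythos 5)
Your proposal is correct and follows essentially the same argument as the paper: decompose $e(x^\star)-e(x_d)$ through the nearby cover point, use the local Lipschitz bound on the $e(x^\star)-e(x_j)$ piece, and control the ratio of denominators via the triangle inequality together with $\norm{x^\star - x_d} > \tau$. The only cosmetic difference is that you state (but do not need) the reverse triangle inequality; the substantive steps match the paper's proof exactly.
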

\ifshort
\else
\begin{proof}
Let $e(x) = p(q(x)) - Cx$. For $x_* \in \argmax_{x \in B_r(x_d)} s(x, x_d)$, we have
\[
S = \frac{\|e(x_*) - e(x_d) \|}{\|x_* - x_d\|}
= \frac{\|e(x') - e(x_d) + e(x_*) - e(x')\|}{\|x_* - x_d\|}
\le \frac{s(x', x_d)\|x' - x_d\| + L\|x_* - x'\|}{\|x_* - x_d\|},
\]
for $x'$ in the $\eps$ cover of $B_r(x_d)$.
Because $S$ can be achieved at least $\tau$ away from $x_d$,
and $x_*$ is at most $\eps$ away from x', we can bound $S$ as
\[
S \le \frac{s(x', x_d)(\|x' - x_*\| + \|x_* - x_d\|) - L\|x_* - x'\|}{\|x_* - x_d\|}
\le \widehat{S}(1 + \eps/\tau) + L\eps/\tau.
\]

\end{proof}
\fi

This result shows that adding a margin to the observed slope bound
 $\widehat S$ provides an upper bound on $S$. 
This margin decreases with the number of samples $N$ and depends on an assumption about the local smoothness of the system. 
This estimation problem is similar to Lipschitz estimation, which has been widely studied with an eye towards optimization \citep{sergeyev2010lipschitz}.
We note that Lipschitz smoothness is a stronger condition than our bounded slope condition, and requires several additional smoothness assumptions
to estimate \citep{wood1996estimation}.

By combining Lemma~\ref{lem:close_implies_gen} with Proposition~\ref{prop:slope_est}, we estimate bounds on the perception errors within a neighborhood of the training data. In what follows, we use these local error bounds in robust control.

\ifshort
\vspace{-0.5em}
\fi
\section{Analysis and synthesis of perception-based controllers}
\ifshort
\vspace{-0.5em}
\fi

\label{sec:control}
The local generalization result in Lemma \ref{lem:close_implies_gen} is useful only if the system remains close to states visited during training.  
In this section, we show
that robust control can ensure that the system will remain close to training data so long as the perception map generalizes well. 
By then enforcing that the composition of the two bounds 
is a contraction, a natural notion of controller robustness emerges that guarantees favorable behavior and generalization. 
In what follows, we adopt an adversarial noise model and exploit the fact that we can design system behavior to bound how far the system deviates from states visited during training. 

\ifshort
\paragraph{Robust control for generalization}
\fi
Recall that for a state-observation pair $(x,z)$, the perception error, defined as $e := p(z) - Cx$, acts as additive noise to the measurement model $y=p(z)$.
While standard linear control techniques can handle uniformly bounded errors,
more care is necessary to further ensure that the system remains with a safe region of the state space, as determined by the training data. 
Through a suitable convex reformulation of the safe region, this goal can be addressed through receding horizon strategies (e.g.~\cite{wan2002robust,mayne2006robust}). 
While these methods are effective in practice, constructing terminal sets and ensuring a priori that feasible solutions exist is not an easy task. To make explicit connections between learning and control, we turn our analysis to a system level perspective on the closed-loop to characterize its sensitivity to noise. 

\ifshort
Once the control input to dynamical system~\eqref{eq:dynamics} is defined to be a linear function of the measurement~\eqref{eq:output-measurement}, the closed-loop behavior is determined entirely by the process noise $\tf w$ and the measurement noise $\tf e$ (as in Figure~\ref{fig:pipeline}).
For any controller that is a linear function of the history of system outputs, we can write the system state and input directly as a linear function of the noise
\begin{equation}
\begin{bmatrix} \tf x \\ \tf u\end{bmatrix} = \fulltf \begin{bmatrix} H\tf w \\ \tf e\end{bmatrix}\:.
\label{eq:response}
\end{equation}

In what follows, we will state results in terms of these system response variables. The connection between these maps and a feedback control law $\tf u = \tf K \tf y$ that achieves the response \eqref{eq:response} is formalized in the \emph{System Level Synthesis} (SLS) framework.  Roughly, SLS states that for any system response $\tf\Phi=\rowtf$ constrained to lie in an affine space defined by the system dynamics $\calA$, there exists a linear feedback controller $\tf K$ that achieves the response $\tf\Phi$.  
A more comprehensive introduction can be found in the full technical report~\citep{full_version}. 
We finish our brief overview by remarking that the linear optimal control problem can be written as
\begin{equation}
\begin{array}{rl}
\minimize_{\tf \Phi} & c(\tf \Phi)~~\st ~~\tf\Phi \in\calA\:.
\end{array}
\label{eq:sls-opt}
\end{equation}
\else

\subsection{System-level parametrization}\label{app:sec:SLS}
The system level synthesis (SLS) framework, proposed by~\citet{SysLevelSyn1}, provides a parametrization of our problem that makes explicit the effects of errors $\tf e$ on system behavior.
Namely, for any controller that is a linear function of the history of system outputs, 
we can write the state and input as a convolution of the system noise and the closed-loop system responses $\{\Phi_{xw}(k), \Phi_{xe}(k), \Phi_{uw}(k), \Phi_{ue}(k)\}$,
\begin{align}\label{eq:phis}
\begin{bmatrix} x_k \\ u_k \end{bmatrix} = \sum_{t=1}^{k} \begin{bmatrix} \Phi_{xw}(t) & \Phi_{xe}(t) \\ \Phi_{uw}(t) & \Phi_{ue}(t)\end{bmatrix} \begin{bmatrix} Hw_{k-t} \\ e_{k-t}\end{bmatrix} \:.
\end{align}
As the equation~\eqref{eq:phis} is linear in the system response elements $\Phi$, convex constraints on state and input translate to convex constraints on the system response elements. 
\citet{SysLevelSyn1} show that for any elements $\{\Phi_{xw}(k), \Phi_{xe}(k), \Phi_{uw}(k), \Phi_{ue}(k)\}$ constrained to obey, for all $k \geq 1$,
\begin{align*}
 \Phi_{xw}(1) = I,~~ 
\begin{bmatrix}\Phi_{xw}(k+1) &\Phi_{xe}(k+1) \end{bmatrix} &= \trueA 
\begin{bmatrix}\Phi_{xw}(k) &\Phi_{xe}(k) \end{bmatrix} + \trueB \begin{bmatrix}\Phi_{uw}(k) &\Phi_{ue}(k) \end{bmatrix},\\
\begin{bmatrix}\Phi_{xw}(k+1) \\ \Phi_{uw}(k+1) \end{bmatrix}
 &=  \begin{bmatrix}\Phi_{xw}(k) \\ \Phi_{uw}(k) \end{bmatrix}\trueA + \begin{bmatrix}\Phi_{xe}(k+1) \\ \Phi_{ue}(k+1) \end{bmatrix} C \:,
\end{align*}
there exists a feedback controller that achieves the desired system responses~\eqref{eq:phis}, and therefore any optimal control problem over linear systems can be cast as a corresponding optimization problem over system response elements. 

To lessen the notational burden of working with these system response elements, we will work with their $z$ transforms.
This is particularly useful for keeping track of semi-infinite sequences, especially since convolutions in time can now be represented as multiplications, i.e.
\[\begin{bmatrix} \tf x \\ \tf u\end{bmatrix} = \fulltf \begin{bmatrix} H\tf w \\ \tf e\end{bmatrix}\:.\]
The corresponding control law $\tf u = \tf K \tf y$ is given by $\tf K = \tfun - \tfuw \tfxw^{-1} \tfxn$.  This controller can be implemented via a state-space realization \citep{anderson2017structured} or as an interconnection of the system response elements \citep{SysLevelSyn1}.
The affine \emph{realizability constraints} can be rewritten as
\begin{align} \label{eq:sls_constraints}
\begin{bmatrix} zI-A & -B \end{bmatrix} \tf\Phi = \begin{bmatrix} I & 0  \end{bmatrix} , ~~ \tf \Phi \begin{bmatrix} zI-A \\ -C \end{bmatrix}  = \begin{bmatrix} I \\ 0  \end{bmatrix} \iff \tf\Phi \in\calA\:,
\end{align}
where we denote the affine space defined by $(A,B,C)$ as $\calA$.

In this SLS framework, many robust control costs can be written as system norms,
\[c(\tf x, \tf u) = 
\sup_{\substack{\norm{\tf w}\leq \eps_w\\ \norm{\tf e}\leq \eps_e}}
\left\|\begin{bmatrix}  Q^{1/2} & \\ & R^{1/2} \end{bmatrix}  
\fulltf
\begin{bmatrix}H\tf w\\ \tf e \end{bmatrix}\right\| = \left\|\begin{bmatrix}  Q^{1/2} & \\ & R^{1/2} \end{bmatrix}  
\fulltf
\begin{bmatrix}\eps_w H\\ \eps_e I \end{bmatrix}
\right\|\:.\]
For LQG control, the objective is equivalent to a system $\htwo$ norm \citep{zhou1996robust}.
\paragraph{A comment on finite-dimensional realizations}
Although the constraints and objective function in this framework are infinite dimensional, two finite-dimensional approximations have been successfully applied.  The first consists of selecting an approximation horizon $T$, and enforcing that $\tf\Phi(T) = 0$ for some appropriately large $T$, which is always possible for systems that are controllable and observable.  When this is not possible, one can instead enforce bounds on the norm of $\tf\Phi(T)$ and use robustness arguments to show that the sub-optimality gap incurred by this finite dimensional approximation decays exponentially in the approximation horizon $T$ \citep{anderson2019,boczar2018finite,dean2017sample,matni2017scalable}.  Finally, in the interest of clarity, we always present the infinite horizon version of the optimization problems, with the understanding that finite horizon approximations are necessary in practice.

 \subsection{Robust control for generalization}
\fi

In what follows, we specialize controller design concerns to our perception-based setting, and develop further conditions on the closed-loop response $\tf \Phi$ to incorporate into the synthesis problem.
Because system level parametrization makes explicit the effects of perception errors $\tf e$ on system behavior,
we can transparently keep track of closed-loop behavior, as made precise in the following Lemma.

\begin{lemma}[Generalization implies closeness]\label{lem:gen_implies_close}
For a perception map $p$ with errors $\tf e=p(\tf z)-C\tf x$,
let the system responses $\rowtf$ lie in the affine space defined by dynamics $(A,B,C)$,
and let $\tf K$ be the associated controller. Then the state trajectory $\tf x$ achieved by the control law
$\tf u = \tf K p(\tf z)$ and driven by noise process $\tf w$, satisfies, for any target trajectory $\tf x_d$,
\begin{align}
\norm{\tf x - \tf x_d} \leq \norm{\widehat{\tf x} - \tf x_d} + \norm{\tfxn}\norm{\tf e}.
\label{eq:gen_gives_close}
\end{align}
where we define the \emph{nominal closeness} $\norm{\widehat{\tf x} - \tf x_d} = \norm{\tfxw H \tf w - \tf x_d} $ to be the deviation from the target trajectory in the absence of measurement errors.
\end{lemma}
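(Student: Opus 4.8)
The plan is to read the result straight off the system-level parametrization. Since $\tf K$ is the controller associated with the responses $\rowtf$ and the measurement channel is $\tf y = p(\tf z) = C\tf x + \tf e$, the closed loop under $\tf u = \tf K p(\tf z)$ driven by $\tf w$ is exactly the one captured by $\begin{bmatrix} \tf x \\ \tf u\end{bmatrix} = \fulltf \begin{bmatrix} H\tf w \\ \tf e\end{bmatrix}$. Reading off the top block row expresses the state trajectory as a nominal term driven by the process noise plus a term driven by the perception error, namely $\tf x = \tfxw H \tf w + \tfxn \tf e$. The one conceptual point worth flagging is that this identity holds signal-wise for the \emph{realized} error sequence $\tf e$, even though $\tf e = p(\tf z) - C\tf x$ is itself produced by the closed loop; the parametrization only requires feeding the realized $\tf e$ into the measurement channel, and makes no attempt to predict it.

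With this expression in hand the rest is a two-line estimate. First I would add and subtract the target trajectory $\tf x_d$ and regroup, writing $\tf x - \tf x_d = (\tfxw H \tf w - \tf x_d) + \tfxn \tf e$, which isolates the nominal closeness $\widehat{\tf x} - \tf x_d = \tfxw H \tf w - \tf x_d$ from the perception-error contribution $\tfxn \tf e$. Applying the triangle inequality for the signal norm $\norm{\cdot}$ then gives $\norm{\tf x - \tf x_d} \leq \norm{\tfxw H \tf w - \tf x_d} + \norm{\tfxn \tf e}$.

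Finally I would use that $\norm{\cdot}$ on operators is an induced norm and hence sub-multiplicative, as recorded in the Notation section, so that $\norm{\tfxn \tf e} \leq \norm{\tfxn}\,\norm{\tf e}$. Substituting this back, and recognizing the first term as $\norm{\widehat{\tf x} - \tf x_d}$ by definition, yields precisely the claimed bound $\norm{\tf x - \tf x_d} \leq \norm{\widehat{\tf x} - \tf x_d} + \norm{\tfxn}\norm{\tf e}$.

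There is no genuine obstacle: the proof is a triangle inequality followed by sub-multiplicativity, once the SLS representation of the closed loop is invoked. The only subtlety deserving care is the modeling one noted above — treating the state-dependent error $\tf e$ as an exogenous signal in the response parametrization — which is legitimate precisely because the bound is stated through the realized magnitude $\norm{\tf e}$. This is also exactly what sets up the subsequent contraction argument, in which $\norm{\tf e}$ will in turn be controlled by $\norm{\tf x - \tf x_d}$ through Lemma~\ref{lem:close_implies_gen}.
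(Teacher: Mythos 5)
Your proof is correct and follows essentially the same route as the paper: invoke the SLS identity $\tf x = \tfxw H \tf w + \tfxn \tf e$, apply the triangle inequality, and bound $\norm{\tfxn \tf e}$ by the induced operator norm. Your additional remark about treating the realized, state-dependent error $\tf e$ as an exogenous input to the response parametrization is a valid clarification of a point the paper leaves implicit.
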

\ifshort
\else
\begin{proof}
Notice that over the course of a trajectory, we have system outputs $\tf y = p(\tf z) = C\tf x + \tf e$.
Then recalling that the system responses are defined such that $\tf x = \tfxw H \tf w + \tfxn \tf e$, we have that
\begin{align*}
\norm{\tf x - \tf x_d} &= \norm{\tfxw H \tf w + \tfxn \tf e - \tf x_d}\leq \norm{\tfxw H \tf w - \tf x_d} + \norm{\tfxn}\norm{\tf e} \:.
\end{align*}
\end{proof}

\fi
The terms in bound \eqref{eq:gen_gives_close} capture different generalization properties.
The first is small if we plan to visit states during operation that are similar to those seen during training.
The second term is a measure of the robustness of our system to the error
$\tf e$.

We are now in a position to state the main result of the paper, which shows that under an additional robustness condition,
Lemmas \ref{lem:close_implies_gen} and \ref{lem:gen_implies_close} combine to define a control invariant set around the training
data within which we can bound the perception errors and consequently the performance.

\begin{theorem}
Let the assumptions of Lemmas \ref{lem:close_implies_gen} and \ref{lem:gen_implies_close} hold and suppose that the training error is bounded, i.e. $\|p(z_d)-Cx_d\|\leq  R_0$ for all $(x_d,z_d)\in\calS_0$. Then as long as
\begin{equation}
\norm{\tfxn}\leq \frac{ 1 - \frac{1}{r}\norm{\widehat{\tf x} - \tf x_d} }{  S + \frac{R_0}{r}}\:,
\label{eq:xetahat-bound}
\end{equation}
the perception errors remain bounded
\begin{equation}
\displaystyle\norm{p(\tf z)  - C\tf x} \leq 
\frac{\norm{\widehat{\tf x} - \tf x_d} + R_0}{1-S\norm{\tfxn}} := \gamma,
\label{eq:gen-bound}
\end{equation}
and the closed-loop trajectory lies within $\calX_\gamma$.
\label{thm:simple-generalization}
\end{theorem}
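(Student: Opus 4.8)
The plan is to treat the two lemmas as the two halves of a feedback loop and close it with a small-gain (self-consistency) argument. Write $\Delta := \norm{\tf x - \tf x_d}$ for the realized deviation from the target, $\eta := \norm{\tf e} = \norm{p(\tf z) - C\tf x}$ for the perception error, and $d_0 := \norm{\widehat{\tf x} - \tf x_d}$ for the nominal closeness. Lemma~\ref{lem:gen_implies_close} supplies one direction, $\Delta \le d_0 + \norm{\tfxn}\,\eta$, while the slope bound underlying Lemma~\ref{lem:close_implies_gen} supplies the other: as long as the trajectory stays within radius $r$ of the training point $x_d$ anchoring the relevant piece of $\calX_\gamma$ (for which $\norm{p(z_d) - Cx_d} \le R_0$), the triangle inequality together with $S$-slope boundedness gives $\eta \le R_0 + S\Delta$. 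The difficulty is that these two inequalities are mutually dependent — the error bound is valid only while $\Delta \le r$, and $\Delta$ is controlled only once $\eta$ is bounded — so one cannot merely assume both hold and solve the resulting linear system.

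First I would record the algebraic consequence of the two inequalities on the region where the slope bound applies. Substituting the error bound into the state bound yields $\Delta(1 - S\norm{\tfxn}) \le d_0 + \norm{\tfxn}R_0$, so that whenever $S\norm{\tfxn} < 1$ one gets $\Delta \le \rho := \frac{d_0 + \norm{\tfxn}R_0}{1 - S\norm{\tfxn}}$. A short rearrangement shows the hypothesis~\eqref{eq:xetahat-bound} on $\norm{\tfxn}$ is exactly equivalent to $\rho \le r$; this is the precise role of that hypothesis, guaranteeing that the self-consistent deviation never leaves the radius in which the slope bound — and hence Lemma~\ref{lem:close_implies_gen} — remains valid. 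Feeding $\Delta \le \rho$ back through $\eta \le R_0 + S\Delta$ then produces the stated bound $\gamma$, and membership $\tf x \in \calX_\gamma$ follows by checking the defining inequality of the safe set at each time against the corresponding training point.

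The main obstacle is making this self-consistency rigorous in the face of the circular dependence, and I would resolve it by exploiting causality. In the system-response convolution~\eqref{eq:phis} the summation runs over $t = 1, \dots, k$, so $x_k$ depends only on the past errors $e_0, \dots, e_{k-1}$; the loop is therefore strictly causal in $\tf e$. This licenses an induction in time: assuming $\norm{x_j - x_{d,j}} \le \rho \le r$ for all $j < k$, the slope bound applies at those times and gives $\norm{e_j} \le R_0 + S\rho$, whereupon the causal (truncated) form of Lemma~\ref{lem:gen_implies_close} yields $\norm{x_k - x_{d,k}} \le d_0 + \norm{\tfxn}(R_0 + S\rho) = \rho$, closing the induction, with the base case handled by the initial state lying within $\rho$ of the target under the nominal-closeness assumption. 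Hence $\Delta \le \rho \le r$ for all time, the slope bound holds along the entire trajectory, and the perception-error and safe-set-membership conclusions follow. I would flag one bookkeeping point: the sharpest constant emerging from $\eta \le R_0 + S\Delta$ is $\frac{R_0 + S d_0}{1 - S\norm{\tfxn}}$, which is dominated by the stated $\gamma = \frac{d_0 + R_0}{1 - S\norm{\tfxn}}$ exactly when $S \le 1$; for $S > 1$ one should either retain the sharper constant or note that $\gamma$ is being used as a convenient upper bound.
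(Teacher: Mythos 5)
Your proof follows essentially the same route as the paper's: combine the slope bound $\norm{\tf e} \le R_0 + S\norm{\tf x - \tf x_d}$ (from Lemma~\ref{lem:close_implies_gen}) with the bound $\norm{\tf x - \tf x_d} \le \norm{\widehat{\tf x} - \tf x_d} + \norm{\tfxn}\norm{\tf e}$ of Lemma~\ref{lem:gen_implies_close}, solve the resulting self-consistency inequality, and check that condition~\eqref{eq:xetahat-bound} is exactly what keeps the resulting deviation within radius $r$. Your two refinements---the explicit causality-based induction in time that breaks the circular dependence (the paper simply posits $x_k \in B_r(x_{d,k})$ for all $k$ and verifies consistency a posteriori), and the observation that the sharp error constant is $(S\norm{\widehat{\tf x} - \tf x_d} + R_0)/(1 - S\norm{\tfxn})$, so that the stated $\gamma$ upper-bounds it only when $S \le 1$---are both correct and, if anything, tighten the paper's own argument.
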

\ifshort
\else
\begin{proof}
Recall that as in the proof of Lemma~\ref{lem:close_implies_gen}, as long as $x_k\in B_r(x_{d,k})$ for all $k$ and some arbitrary $x_{d,k}\in\calS_0$,
\begin{align}\label{eq:e_norm}
\|e_k\| \leq S \| x_k - x_{d,k}\|+\|e_{d_k}\|\implies \|\tf e\| = \max_k \| e_k\| \leq S \|\tf x - \tf x_d\|+\|\tf e_d\|\:.
\end{align}
By assumption, $\|\tf e_d\| \leq R_0$. Substituting this expression into the result of Lemma~\ref{lem:gen_implies_close}, we see that as long as $\|x_k - x_{d,k}\|\leq r$ for all $k$,
\[\norm{\tf x - \tf x_d} \leq \norm{\widehat{\tf x} - \tf x_d} + \norm{\tfxn}(S \|\tf x - \tf x_d\|+\|\tf e_d\|) 
\iff 
\norm{\tf x - \tf x_d} \leq \frac{\norm{\widehat{\tf x} - \tf x_d} + \eps_\eta \norm{\tfxn}}{1-S\norm{\tfxn}}\:,\]
Next, to ensure that the the radius is bounded, first note that by norm definition,
$\max_k\|x_k - x_{d,k}\|_\infty \leq r$ if and only if $\|\tf x - \tf x_{d}\|\leq r$.
A sufficient condition for this is given by
\[\frac{\norm{\widehat{\tf x} - \tf x_d} + R_0 \norm{\tfxn}}{1-S\norm{\tfxn}}\leq r
\iff 
\norm{\tfxn}\leq \frac{ 1 - \frac{1}{r}\norm{\widehat{\tf x} - \tf x_d} }{ S + \frac{R_0}{ r}}\:,\]
and thus we arrive at the robustness condition. 
Since the bound on $\norm{\tf x - \tf x_d}$ is valid, we now use it to bound $\|\tf e\|$, starting with~\eqref{eq:e_norm} and rearranging,
\[\|\tf e\| \leq \frac{\norm{\widehat{\tf x} - \tf x_d} + R_0}{1-S\norm{\tfxn}}\:.\]
\end{proof}
\fi

Theorem \ref{thm:simple-generalization} shows that the bound \eqref{eq:xetahat-bound} should be used during controller synthesis to ensure generalization.  
Feasibility of the synthesis problem depends on the controllability and observability of the system $(A,B,C)$, which impose limits on how small $\norm{\tfxn}$ can be made to be, and on the planned deviation from training data as captured by the quantity $\norm{\widehat{\tf x} - \tf x_d}$.

\ifshort
\paragraph{Robust synthesis for waypoint tracking}
\else
\subsection{Robust synthesis for waypoint tracking}
\fi
We now specialize to the task of waypoint tracking to simplify the term $\norm{\widehat{\tf x} - \tf x_d}$ and propose a robust control synthesis problem.
\ifshort
As discussed further in the extended technical report~\citep{full_version},
\else
Recall that
\fi
waypoint tracking can be encoded by defining the state $x_k := [ \xi_k-r_k; r_k]$ as the concatenation of tracking error and waypoint and the disturbance as the change in reference, $w_k := r_{k+1} - r_k$.
\ifshort
\vspace{-1.5em}
\fi
\begin{proposition}
For a reference tracking problem with $\|r_{k+1} - r_k\|\leq \Delta_\mathrm{ref}$ and a reference trajectory within a ball of radius $r_\mathrm{ref}$ from the training data,
\[\norm{\widehat{\tf x} - \tf x_d} \leq \Delta_\mathrm{ref}\|\begin{bmatrix}I&0\end{bmatrix} \tfxw H\| + r_\mathrm{ref}\:.\]
\end{proposition}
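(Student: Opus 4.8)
The plan is to exploit the block structure of the augmented state $x_k = [\xi_k - r_k;\, r_k]$ to split the nominal closeness $\norm{\widehat{\tf x} - \tf x_d} = \norm{\tfxw H \tf w - \tf x_d}$ into a tracking-error contribution, controlled by the reference increments $\tf w$, and a reference-location contribution, controlled by the distance to the training data. First I would record that with $w_k := r_{k+1} - r_k$ the disturbance signal obeys $\norm{\tf w} = \sup_k \norm{w_k} \le \Delta_\mathrm{ref}$, so that any response driven by $\tf w$ can be bounded through sub-multiplicativity of the induced norm.

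Next I would observe that, because the reference dynamics $r_{k+1} = r_k + w_k$ are autonomous (unaffected by the control input or by measurement error), the waypoint block of the nominal trajectory reproduces the reference itself, $\begin{bmatrix}0&I\end{bmatrix}\widehat{\tf x} = \tf r$, while the tracking-error block is $\begin{bmatrix}I&0\end{bmatrix}\widehat{\tf x} = \begin{bmatrix}I&0\end{bmatrix}\tfxw H \tf w$. Introducing the perfect-tracking trajectory $\bar x_k := [0;\, r_k]$, the difference $\widehat{x}_k - \bar x_k$ has a zero waypoint block and tracking-error block equal to $\begin{bmatrix}I&0\end{bmatrix}\widehat{x}_k$; since padding a vector with zeros leaves its $\infty$-norm unchanged, this yields $\norm{\widehat{\tf x} - \bar{\tf x}} = \norm{\begin{bmatrix}I&0\end{bmatrix}\tfxw H \tf w} \le \norm{\begin{bmatrix}I&0\end{bmatrix}\tfxw H}\,\norm{\tf w} \le \Delta_\mathrm{ref}\,\norm{\begin{bmatrix}I&0\end{bmatrix}\tfxw H}$ by sub-multiplicativity.

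I would then select the target trajectory $\tf x_d$ from the training set so that, at each $k$, $x_{d,k} \in \calS_0$ is the training point closest to the perfect-tracking state $\bar x_k$; the hypothesis that the reference trajectory lies within a ball of radius $r_\mathrm{ref}$ of the training data is exactly the statement that $\norm{\bar{\tf x} - \tf x_d} \le r_\mathrm{ref}$. Applying the triangle inequality $\norm{\widehat{\tf x} - \tf x_d} \le \norm{\widehat{\tf x} - \bar{\tf x}} + \norm{\bar{\tf x} - \tf x_d}$ and substituting the two bounds gives the claim.

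The main obstacle is bookkeeping rather than analysis: one must make the block decomposition of the augmented closed loop precise, verifying that the waypoint channel is genuinely autonomous so that $\begin{bmatrix}0&I\end{bmatrix}\widehat{\tf x} = \tf r$, and one must pin down the intended reading of ``within a ball of radius $r_\mathrm{ref}$ of the training data'' so that it delivers $\norm{\bar{\tf x} - \tf x_d} \le r_\mathrm{ref}$ with $\tf x_d$ an admissible target assembled from $\calS_0$. Once these are fixed, the estimate follows immediately from the triangle inequality and the sub-multiplicative property of the induced norm.
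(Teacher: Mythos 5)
Your proposal is correct and follows essentially the same route as the paper: the paper likewise takes the training states to be $x_{d,k}=[0;\xi_{d,k}]$ with zero tracking error, splits $\norm{\widehat{\tf x}-\tf x_d}$ via the intermediate perfect-tracking point $[0;r_k]$ into $\|\tf\xi-\tf r\|+\|\tf r-\tf\xi_d\|$, and bounds the first term by sub-multiplicativity and the second by the $r_\mathrm{ref}$ hypothesis. Your version merely makes the block bookkeeping (the autonomy of the waypoint channel and the zero-padding of the $\infty$-norm) more explicit than the paper's one-line argument.
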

\ifshort
\else
\begin{proof}
For the training data, we can assume zero tracking errors, so that $\tf \xi_d = \tf r_d$. Then $x_{d,k} = [0; \xi_{d,k}]$.
Thus, we can rewrite $\norm{\widehat{\tf x} - \tf x_d} \leq \|\tf \xi - \tf r\| + \|\tf r - \tf\xi_d\| = \|\begin{bmatrix}I&0\end{bmatrix} \tfxw \tf w\| + \|\tf r - \tf\xi_d\|$. 
\end{proof}

\fi
Using this setting, we add the robustness condition in~\eqref{eq:xetahat-bound} to a control synthesis problem as
\begin{equation*}
\begin{array}{rl}
\minimize_{\tf \Phi} & c(\tf \Phi)\\
\st &\tf\Phi \in\calA,~~ (S+\frac{R_0}{r})\norm{\tfxn} + \frac{\Delta_\mathrm{ref}}{r}\|\begin{bmatrix}I&0\end{bmatrix} \tfxw H\| + \frac{r_\mathrm{ref}}{r}\leq  1  \:.
\end{array}
\end{equation*}
Notice the trade-off between the size of different parts of the system response. 
Because the responses must lie in an affine space, they cannot both become arbitrarily small. 
Therefore, the synthesis problem must trade off between sensitivity to measurement errors and tracking fidelity.
These trade-offs are mediated by the quality of the perception map, through its slope-boundedness and training error, and the ambition of the control task, through its deviation from training data and distances between waypoints.

\ifshort
\else

While the robustness constraint in~\eqref{eq:xetahat-bound} is enough to guarantee a finite cost,
it does not guarantee a small cost.
To achieve this, we
incorporate the error bound to
arrive at the following robust synthesis procedure:
\begin{equation*}
\begin{array}{rl}
\minimize_{\tf \Phi,\gamma} & \left\|\begin{bmatrix} Q^{1/2} & \\ & R^{1/2} \end{bmatrix}  
\fulltf
\begin{bmatrix}\Delta_\mathrm{ref} H\\\gamma I \end{bmatrix}
 \right\|\\
\st &\tf\Phi \in\calA,~~ (S+\frac{R_0}{r})\norm{\tfxn} + \frac{\Delta_\mathrm{ref}}{r}\|\begin{bmatrix}I&0\end{bmatrix} \tfxw H\| + \frac{r_\mathrm{ref}}{r}\leq  1  \\
&\Delta_\mathrm{ref}\|\begin{bmatrix}I&0\end{bmatrix} \tfxw H\| + r_\mathrm{ref} + R_0 \leq \gamma(1-S\norm{\tfxn})\:.
\end{array}
\end{equation*}

This is a convex program for fixed $\gamma$, so the full problem can then be approximately solved via one-dimensional search.

 \fi

\ifshort
\paragraph{Necessity of robustness}
\else
\subsection{Necessity of robustness}
\fi
Robust control is notoriously conservative, and our main result reslies heavily on small gain-like arguments.
Can the conservatism inherent in this approach be generally reduced? In this section, we answer in the negative by describing 
\ifshort
an example
\else
a class of examples
\fi 
for which the robustness condition in Theorem~\ref{thm:simple-generalization} is necessary.
For simplicity, we specialize to the goal of regulating a system to the origin, and assume that $(0,z_0)$ is in the training set, with $p(z_0) = 0$.

\ifshort
\else

We consider the following optimal control problem
\begin{equation*}
\begin{array}{rl}
\minimize_{\tf \Phi} & c(\tf \Phi)~~\st ~~\tf\Phi \in\calA,~~\|\tfxn\| \leq \alpha\:.
\end{array}
\end{equation*}
and define $\bar{\tf\Phi}$ as a minimizing argument for the problem without the inequality constraint. 
For simplicity, we consider only systems in which the closed-loop is strictly proper and has a state-space realization. %

\begin{proposition}
Suppose that 
\[\|\bar{\tf\Phi}_{\tf{xn}}\|_{\lone} = 
\|\bar{\tf\Phi}_{\tf{xn}}(1)\|_{\infty\to\infty}\:.\]
Then there exists a differentiable error function with slope bound $S$
such that $x=0$ is an asymptotically stable equilibrium if and only if $\alpha < \frac{1}{S}$.
\end{proposition}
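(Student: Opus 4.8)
The plan is to reduce the perturbed closed loop to a scalar one-step recursion whose gain is exactly $S\alpha$, so that asymptotic stability is governed precisely by the small-gain threshold. With no process noise ($\tf w = 0$) and the normalization $p(z_0)=0$ at $x=0$ (so the error function $e$ vanishes at the origin), the closed loop under the controller achieving $\bar{\tf\Phi}$ is the interconnection $\tf x = \bar{\tf\Phi}_{\tf{xn}}\,\tf e(\tf x)$, where $\tf e(\tf x) = \{e(x_k)\}$ and $\alpha := \lonenorm{\bar{\tf\Phi}_{\tf{xn}}}$.

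For the ``if'' direction I would invoke a small-gain argument directly. Any error function with slope bound $S$ and $e(0)=0$ satisfies $\|\tf e(\tf x)\|\le S\|\tf x\|$, so by submultiplicativity of the induced norm the composed map $\tf x \mapsto \bar{\tf\Phi}_{\tf{xn}}\,\tf e(\tf x)$ has gain at most $\lonenorm{\bar{\tf\Phi}_{\tf{xn}}}\,S = S\alpha$. Hence $\alpha < 1/S$ forces the interconnection to be a contraction and $x=0$ to be asymptotically stable; this holds for every admissible error function, and in particular for the one constructed below.

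For the ``only if'' direction I would construct an explicit \emph{linear} (hence differentiable) error function $e_\star(x) = Mx$ that saturates the gain. Write $A_1 := \bar{\tf\Phi}_{\tf{xn}}(1)$ and let $i^\star$ be a row achieving $\|A_1\|_{\infty\to\infty} = \sum_j |(A_1)_{i^\star j}| = \alpha$; by hypothesis this same row achieves the $\lone$ norm of the full operator. Take $M = u\, e_{i^\star}^\T$ with $u_j = S\,\sgn((A_1)_{i^\star j})$, so that $e_\star$ depends only on the $i^\star$ coordinate of the state and $\|M\|_{\infty\to\infty} = \|u\|_\infty = S$, matching the slope bound. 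The key step is the reduction: setting $\sigma_k := (x_k)_{i^\star}$, the $i^\star$-th coordinate of the interconnection reads $\sigma_k = \sum_{t\ge1}(\bar{\tf\Phi}_{\tf{xn}}(t)\,u)_{i^\star}\,\sigma_{k-t}$; the $t=1$ coefficient equals $\sum_j (A_1)_{i^\star j}\,S\,\sgn((A_1)_{i^\star j}) = S\alpha$, while the norm-concentration hypothesis $\lonenorm{\bar{\tf\Phi}_{\tf{xn}}} = \|A_1\|_{\infty\to\infty}$ forces row $i^\star$ of every tap $t\ge2$ to vanish, eliminating the remaining coefficients. Thus $\sigma_k = S\alpha\,\sigma_{k-1}$ exactly, a scalar recursion with ratio $S\alpha$. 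When $\alpha \ge 1/S$ this mode does not decay, so some trajectory fails to converge to the origin and $x=0$ is not asymptotically stable; combined with the ``if'' direction this yields the claimed equivalence for $e_\star$.

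The main obstacle I anticipate is making the reduction to the scalar recursion airtight: one must verify that $i^\star$ can be chosen to simultaneously maximize the first tap and the full $\lone$ norm, deduce carefully from the equality $\lonenorm{\bar{\tf\Phi}_{\tf{xn}}} = \|A_1\|_{\infty\to\infty}$ that the higher taps contribute nothing on row $i^\star$, and translate the non-decaying mode $\sigma_k = S\alpha\,\sigma_{k-1}$ into a genuine statement about (lack of) asymptotic stability of the closed-loop state-space realization, e.g. by exhibiting a closed-loop pole at $S\alpha$ (a nonzero fixed point when $S\alpha = 1$ and a divergent trajectory when $S\alpha > 1$). A minor technical point is that $e_\star$ is unbounded; if a bounded perception error is desired one can leave $e_\star$ linear near the origin and smoothly saturate it outside with slope still at most $S$, which leaves the local asymptotic-stability analysis unchanged.
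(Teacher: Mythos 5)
There is a genuine gap, and it stems from a misreading of the hypothesis. In the paper, $\bar{\tf\Phi}_{\tf{xn}}(1)$ denotes the transfer function evaluated at $z=1$, i.e.\ the DC gain $\sum_{k\geq 0}\Phi_{xn}(k) = C_\mathrm{CL}(I-A_\mathrm{CL})^{-1}B_\mathrm{CL}$, not the first impulse-response tap. The hypothesis $\lonenorm{\bar{\tf\Phi}_{\tf{xn}}} = \|\bar{\tf\Phi}_{\tf{xn}}(1)\|_{\infty\to\infty}$ therefore says that the worst-case $\ell_\infty$ gain is achieved by a constant input (e.g.\ when the impulse response coefficients of the maximizing row all have one sign); it does \emph{not} say that the taps $t\geq 2$ vanish on that row. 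Your deduction that ``the norm-concentration hypothesis forces row $i^\star$ of every tap $t\ge2$ to vanish'' is valid only under your (first-tap) reading, which describes a degenerate pure-delay response that the paper's own double-integrator/LQG example does not satisfy. The correct construction is essentially the one you wrote down, but applied to the DC gain: choose real $w,v$ with $v^\T \bar{\tf\Phi}_{\tf{xn}}(1) w = \lonenorm{\bar{\tf\Phi}_{\tf{xn}}}$ and set $J = S\,wv^\T$; then $\det(I - \bar{\tf\Phi}_{\tf{xn}}(1)J) = 0$ places a closed-loop eigenvalue at $z=1$ when $S = 1/\lonenorm{\bar{\tf\Phi}_{\tf{xn}}}$, which is the real-stability-radius argument (Hinrichsen--Pritchard) the paper invokes.

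A second, related gap is the one you flagged yourself but did not close: the fixed-point relation $\tf x = \bar{\tf\Phi}_{\tf{xn}}\tf e(\tf x)$ with zero process noise and zero initial condition has only the identically-zero solution, so the recursion $\sigma_k = S\alpha\,\sigma_{k-1}$ does not by itself exhibit a non-decaying trajectory. To conclude lack of asymptotic stability you must pass to the closed-loop state-space realization (plant state plus controller state), linearize about the origin using the derivative $J$ of the error function, and show that $A_\mathrm{CL} + B_\mathrm{CL} J C_\mathrm{CL}$ has an eigenvalue on or outside the unit circle; this is precisely the step the paper carries out and that your sketch defers. Finally, note that in the proposition $\alpha$ is the synthesis constraint level, not the achieved norm: the argument needs the observation that when $\alpha \geq 1/S = \lonenorm{\bar{\tf\Phi}_{\tf{xn}}}$ the constraint is inactive, so the synthesized controller coincides with the unconstrained optimum $\bar{\tf\Phi}$, which the constructed $J$ then destabilizes. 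Your ``if'' direction via small gain matches the paper and is fine.
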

\begin{proof}
Sufficiency follows from our main analysis, or alternatively from a simple application of the small gain theorem.
The necessity follows by construction, with a combination of classic nonlinear instability arguments and by considering real stability radii.

Recall that $y_k = Cx_k + (p(q(x_k)) -Cx_k)$. Define $\zeta_k$ to represent the internal state of the controller. Then we can write the system's closed-loop behavior as (ignoring for now the effect of process noise, which does not affect the stability analysis)

\begin{align*}
\begin{bmatrix}x_{k+1} \\\zeta_{k+1}\end{bmatrix}
= A_\mathrm{CL} \begin{bmatrix}x_{k} \\\zeta_{k}\end{bmatrix} + B_\mathrm{CL} e(x_k) \:.
\end{align*}
Suppose that the error function $e(x)$ is differentiable at $x=0$ with derivative $J$. Then the stability of the origin depends on the linearized closed loop system $A_\mathrm{CL}(J) := A_\mathrm{CL}  + B_\mathrm{CL} JC_\mathrm{CL}$ where $C_\mathrm{CL} = [I~0]$ picks out the relevant component of the closed-loop state. If any eigenvalues of $A_\mathrm{CL}(J)$ lie outside of the unit disk, then $x=0$ is not an asymptotically stable equilibrium.

Switching gears,  recall that ${\tf \Phi}_{\tf x\tf e}(z) = C_\mathrm{CL}(zI-A_\mathrm{CL})^{-1}B_\mathrm{CL}$. 
We define
$S = 1/\|\bar{\tf\Phi}_{\tf{xe}}\|$ and notice that
if $\alpha \geq \frac{1}{S}$, then $\bar{\tf\Phi}$ is also the solution to the constrained problem.
By assumption, $\|\bar{\tf\Phi}_{\tf{xn}}\|_{\lone} = \|\bar{\tf\Phi}_{\tf{xn}}(1)\|_{\infty\to\infty}$.
Thus, there exist real $w,v$ such that $v^\top \bar{\tf\Phi}_{\tf{xn}}(1) w = \|\bar{\tf\Phi}_{\tf{xn}}\|$.
If we set $J = S wv^\top$, then $A_\mathrm{CL}(J)$ has an eigenvalue on the unit disk and $x=0$ is not an asymptotically stable equilibrium.
To see why this is true, 
recall the classic result for stability radii (e.g.
Corollary 4.5 in~\citet{hinrichsen1990real}) and
notice
\[\bar{\tf\Phi}_{\tf{xn}}(1)  = \sum_{k\geq 0} \Phi_{xn}(k) 
 = \sum_{k\geq 0} C_\mathrm{CL} A_\mathrm{CL}^k B_\mathrm{CL}
 = C_\mathrm{CL}(I-A_\mathrm{CL})^{-1} B_\mathrm{CL}  \:.\]
Thus, for any error function with derivative $J=S wv^\top$ at zero, the robustness condition is necessary as well as sufficient. One such error function is simply $e(x) = J x$, though many more complex forms exist.
\end{proof}

We now present a simple example in which this condition is satisfied, and construct the corresponding error function which results in an unstable system.
\paragraph{Example}
\fi
Consider the double integrator  
\begin{align}
x_{k+1} = \begin{bmatrix} 1 & dt \\ 0 & 1 \end{bmatrix} x_k + \begin{bmatrix}0\\1\end{bmatrix}u_k,\quad y_k = p(z_k) = x_k + e(x_k)\:, \label{eq:double_int}
\end{align}
and the control law resulting 
\ifshort
from the $\htwo$ optimal control problem,
\[\min_{\tf\Phi} \left\|  
\fulltf
\right\|_{\htwo} \text{subject to}~\tf\Phi \in\calA,~~\|\tfxn\| \leq \alpha\:.\]
\else
from the $\htwo$ optimal control problem with identity weighting matrices along with the robustness constraint $\|\tfxn\| \leq \alpha$.
\fi
Notice that the solution to the unconstrained problem would be the familiar LQG combination of Kalman filtering and static feedback on the estimated state.
\ifshort

\begin{proposition}
For system~\eqref{eq:double_int} with $dt=0.5$, there exist error functions $e(x) = p(q(x))-Cx$ with slope globally bounded about the origin by $S\approx 0.276$ such that the closed-loop system is asymptotically stable if and only if $\alpha < \frac{1}{S}$.
Thus the robustness condition~\eqref{eq:xetahat-bound} is necessary as well as sufficient.
\end{proposition}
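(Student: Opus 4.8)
The plan is to reduce this concrete claim to the general necessity proposition already established, so that the only real work is to (i) compute the unconstrained $\htwo$-optimal closed loop for the double integrator and (ii) verify that its error-to-state response satisfies the structural hypothesis $\|\bar{\tf\Phi}_{\tf{xn}}\|_{\lone} = \|\bar{\tf\Phi}_{\tf{xn}}(1)\|_{\infty\to\infty}$. First I would solve $\min_{\tf\Phi}\|\fulltf\|_{\htwo}$ subject to $\tf\Phi\in\calA$ for the data $A = \bmattwo{1}{dt}{0}{1}$, $B = \cvectwo{0}{1}$, $C = I$, with $dt = 0.5$ and identity weights; its minimizer $\bar{\tf\Phi}$ is the LQG controller, namely a static LQR gain composed with a Kalman filter, each obtained from the associated discrete algebraic Riccati equations. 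Interconnecting this controller realization with the plant yields the closed-loop matrices $A_\mathrm{CL}$, $B_\mathrm{CL}$, and $C_\mathrm{CL} = [I~0]$, and hence the transfer function $\bar{\tf\Phi}_{\tf{xn}}(z) = C_\mathrm{CL}(zI - A_\mathrm{CL})^{-1}B_\mathrm{CL}$.

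Second, I would evaluate the $\lone$ (induced $\ell_\infty\to\ell_\infty$) norm of $\bar{\tf\Phi}_{\tf{xn}}$ from its impulse response $\{\Phi_{xn}(k)\}$ and check the sign-consistency condition that forces this norm to coincide with the DC gain $\bar{\tf\Phi}_{\tf{xn}}(1) = \sum_{k\geq 0}\Phi_{xn}(k) = C_\mathrm{CL}(I - A_\mathrm{CL})^{-1}B_\mathrm{CL}$. Since the closed loop is strictly proper and exponentially stable, the tail of this series is geometrically small, so I can certify that the row entries of the impulse response do not change sign (equivalently, that summing magnitudes equals the magnitude of the sum), giving $\|\bar{\tf\Phi}_{\tf{xn}}\| = \|\bar{\tf\Phi}_{\tf{xn}}(1)\|_{\infty\to\infty}$ and therefore $S = 1/\|\bar{\tf\Phi}_{\tf{xn}}\| \approx 0.276$. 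With the hypothesis of the general necessity proposition thus verified, the stated equivalence follows directly: for $\alpha \geq 1/S$ the constraint $\|\tfxn\|\leq\alpha$ is inactive, so $\bar{\tf\Phi}$ remains optimal, and taking $J = Swv^\top$ with $w,v$ the unit directions achieving $v^\top\bar{\tf\Phi}_{\tf{xn}}(1)w = \|\bar{\tf\Phi}_{\tf{xn}}(1)\|$ places an eigenvalue of $A_\mathrm{CL}(J) = A_\mathrm{CL} + B_\mathrm{CL}JC_\mathrm{CL}$ on the unit circle via the real stability radius identity, so the linear error function $e(x) = Jx$ (whose slope is globally exactly $S$) destabilizes the origin; conversely, for $\alpha < 1/S$ the small-gain argument behind Theorem~\ref{thm:simple-generalization} guarantees asymptotic stability.

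The main obstacle is precisely the verification in the second step: the identity $\|\bar{\tf\Phi}_{\tf{xn}}\|_{\lone} = \|\bar{\tf\Phi}_{\tf{xn}}(1)\|_{\infty\to\infty}$ is what permits the worst-case real perturbation to be aligned with the DC gain so that the destabilizing eigenvalue sits at $z=1$, and it is not automatic, requiring that the error-to-state impulse response not oscillate in sign. I expect it to hold here because the LQG closed loop is well damped for $dt=0.5$ with identity weights, but confirming it rigorously means either bounding the geometric tail and checking finitely many leading coefficients, or arguing the needed positivity directly from the sign structure of $A_\mathrm{CL}$. The remaining bookkeeping, namely the Riccati solves, the norm evaluation producing $S\approx 0.276$, and the extraction of $(w,v)$, is routine once this sign condition is secured.
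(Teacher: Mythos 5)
Your proposal follows essentially the same route as the paper: the paper also reduces this example to its general necessity proposition, takes $\bar{\tf\Phi}$ to be the unconstrained LQG closed loop, and exhibits the rank-one derivative $J = S\,wv^\top$ (concretely $J = -S\left[\begin{smallmatrix}1 & 0\\ 1 & 0\end{smallmatrix}\right]$ with $S = 1/\|\bar{\tf\Phi}_{\tf x\tf e}\|$), with the linear error function $e(x) = Jx$ as the destabilizing instance. The only difference is that the paper simply asserts ``calculations confirm'' the hypothesis $\|\bar{\tf\Phi}_{\tf{xn}}\|_{\lone} = \|\bar{\tf\Phi}_{\tf{xn}}(1)\|_{\infty\to\infty}$, whereas you correctly identify this sign-consistency of the impulse response as the one nontrivial verification and sketch how to certify it.
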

\else
We denote the optimal unconstrained system response by $\bar{\tf\Phi}$.

Consider an error function with slope bounded by $S=1/{\|\bar{\tf\Phi}_{\tf x\tf e}\|}$ and derivative at $x=0$ equal to
$$J = \begin{bmatrix}-1/{\|\bar{\tf\Phi}_{\tf x\tf e}\|} & 0 \\ -1/{\|\bar{\tf\Phi}_{\tf x\tf e}\|} & 0\end{bmatrix}\:,$$ 
Calculations confirm that the $\lone$ norm satisfies the relevant property and that the origin is not a stable fixed point if the synthesis constraint $\alpha \geq \frac{1}{S}$.  
\fi 
\ifshort
\vspace{-1em}
\fi
\section{Experiments}\label{sec:experiments}
\ifshort
\vspace{-0.5em}
\fi

We demonstrate our theoretical results with examples of control from pixels, using both simple synthetic images and complex graphics simulation.
The synthetic example uses generated $64\times 64$ pixel images of a moving blurry white circle on a black background;
the complex example uses $800 \times 600$ pixel dashboard camera images of a vehicle in the CARLA simulator platform \citep{carla}.
\ifshort
\else
\begin{figure}[tb]
\centering
\includegraphics[height=.1\textheight]{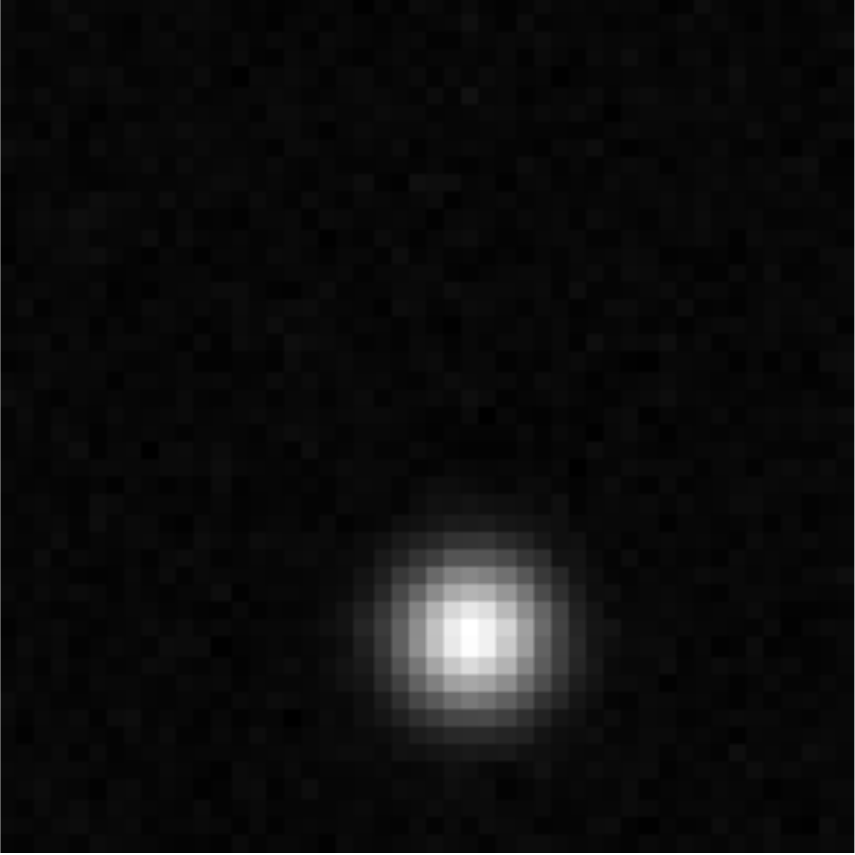}~~
\includegraphics[height=.1\textheight]{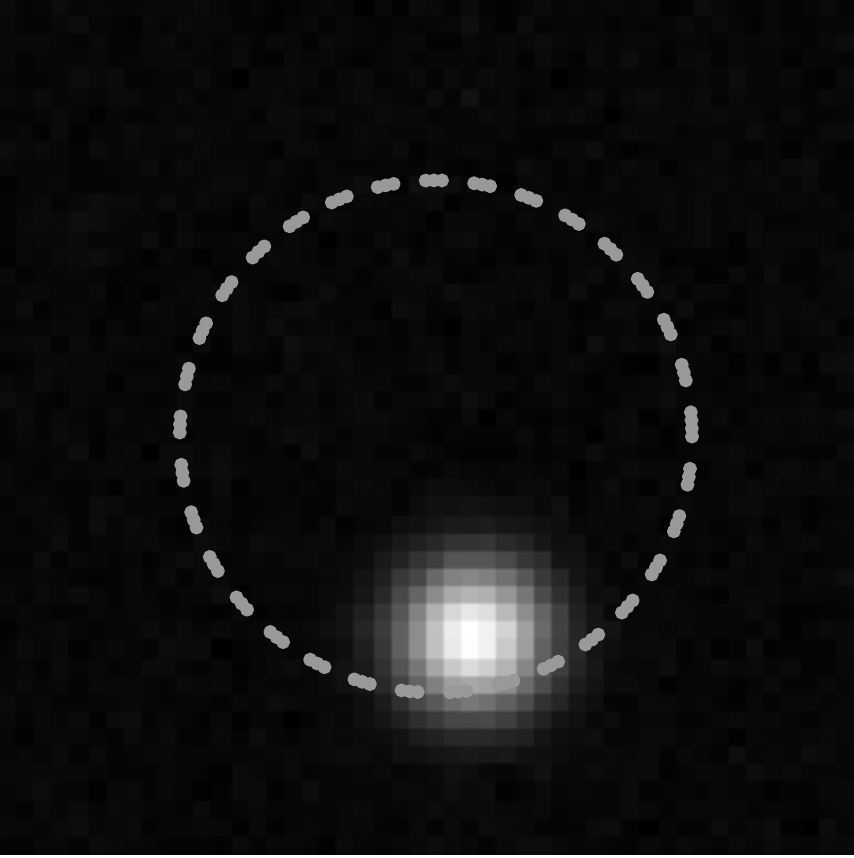}~~
\includegraphics[height=.1\textheight]{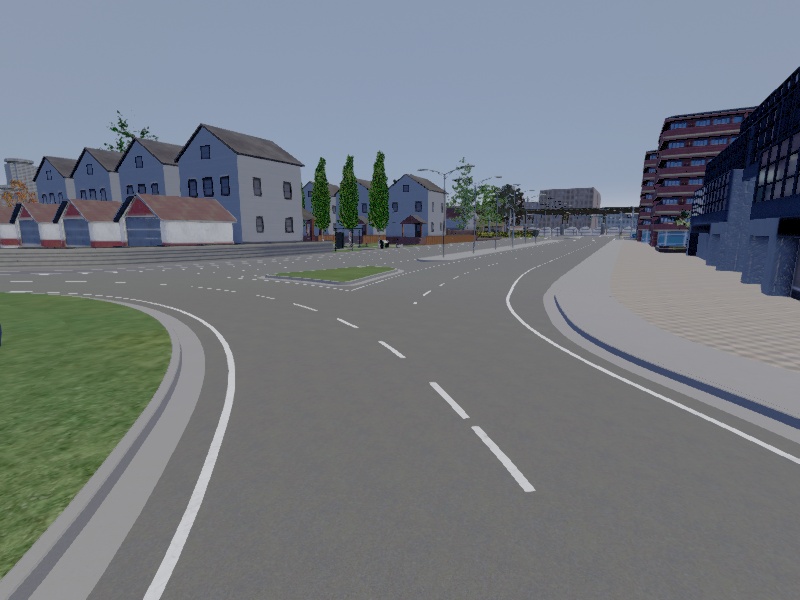}~~
\includegraphics[height=.1\textheight]{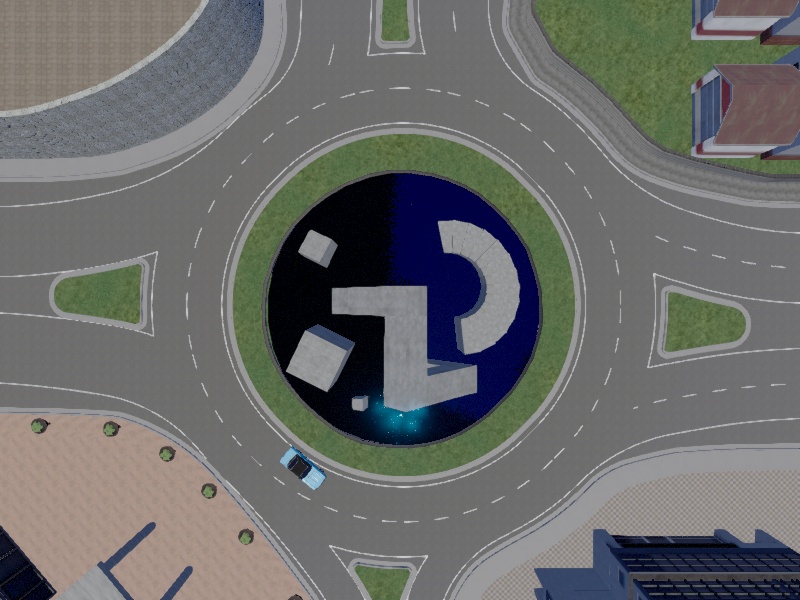}~~
\caption{In (a), visual inputs $\{z_t\}$ for the synthetic (left) and CARLA (right) examples.
In (b), (left) the $\ell_\infty$ bounded ball the synthetic circle must remain inside in the face of bounded adversarial noise,
(center) the nominal trajectory the synthetic circle is driven to follow, and (right) and the nominal trajectory the simulated vehicle is driven to follow.} \label{fig:visual_inputs_refs}
\end{figure}
Figure~\ref{fig:visual_inputs_refs}(a) shows representative images seen by the controllers.  
\fi

For both visual settings, we demonstrate our results in controlling a 2D double integrator to track a periodic reference.
The 2D double integrator state is given by $x_k^\top = [(x_k^{(1)})^\top\, (x_k^{(2)})^\top]$, and each component evolves independently according to the dynamics in~\eqref{eq:double_int} with $dt=0.1$.
For all examples, the sensing matrix $C$ extracts the position of the system, i.e., 
$Cx_k = [x^{(1)}_{1,k}, x^{(2)}_{1,k}]$.
Training and validation trajectories are generated by driving the system with an optimal state feedback controller (i.e. where measurement $\tf y = \tf x$)
to track a desired reference trajectory $\tf w = \tf r + \tf v$, 
where $\tf r$ is a nominal reference, and $\tf v$ is a random norm bounded random perturbation satisfying $\norm{v_k}_\infty\leq0.1$.

We consider three different perception maps: a linear map for the synthetic example and both visual odometry and a convolutional neural net for the CARLA example.
For the CNN, we collect a dense set of training examples around the reference to train the model.
We use the approach proposed by \citet{coates2012learning} to learn a convolutional representation of each image:
each resized and scaled image is passed through a single convolutional, ReLU activation, and max pooling layer.
We then fit a linear map of these learned image features to position and heading of the camera pose.
We require approximately $30000$ training points.
During operation, {pixel-data} $z$ is passed through the CNN to obtain {position estimates} $y$, {which are then used by the robust controller}.
We note that other more sophisticated architectures for feature extraction would also be reasonable
to use in our control framework; we find that this one is conceptually simple and sufficient for
tracking around our fixed reference.

To perform visual odometry, we first collect images from known poses around the desired
reference trajectory. We then use ORB SLAM \citep{orbslam} to build a global map
of visual features and a database of reference images with known poses.
This is the ``training'' phase.  We use one trajectory of $200$ points;
the reference database is approximately this size.
During operation, an image $z$ is matched with an image $z_d$ in the database.
The reprojection error between the matched features in $z_d$ with known pose $x_d$
and their corresponding features in $z$ is then minimized to generate pose estimate $y$.
For more details on standard visual odometry methods, see \cite{scaramuzza2011visual}.
We highlight that modern visual SLAM algorithms incorporate sophisticated
filtering and optimization techniques for localization in previously unseen environments
with complex dynamics; we use a simplified algorithm under this training and testing
paradigm in order to better isolate the data dependence.

We then estimate safe regions for all three maps, as described in Section~\ref{sec:perception-error}. %
In the top panels of Figure~\ref{fig:error-profile-rollouts} we show the error profiles as a function of distance
to the nearest training point for the linear (left), SLAM (middle), and CNN (right) maps.
We see that these data-dependent localization schemes exhibit the
local slope bounded property posited in the previous section.

\begin{figure}[tb]
\centering
\includegraphics[height=0.22\textheight]{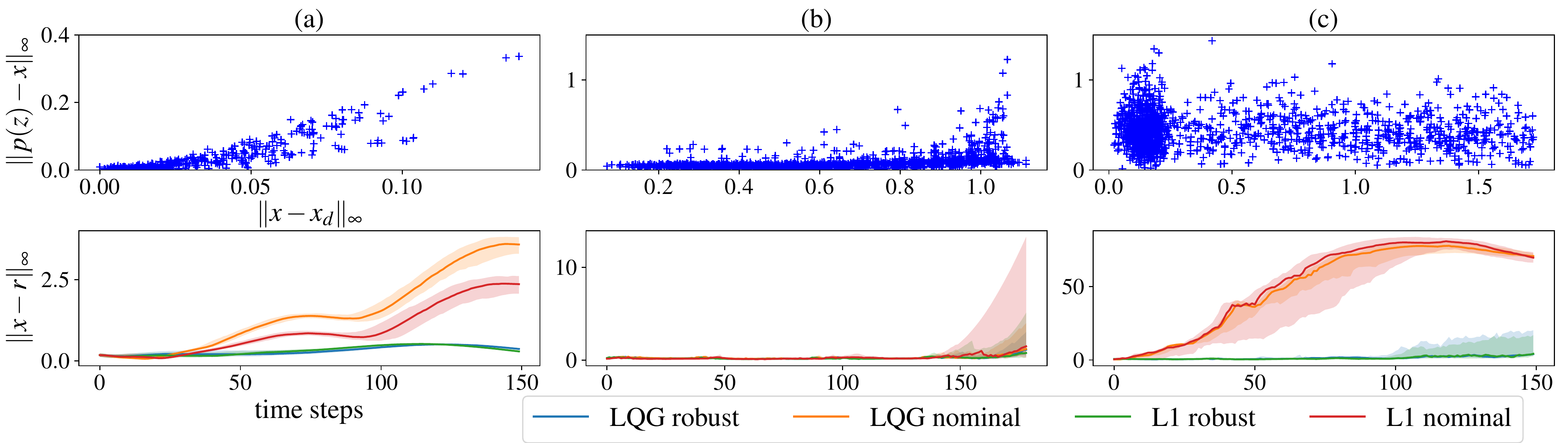}
\ifshort
\vspace{-1em}
\fi
\caption{
    (top) Test perception error $\|p(z) - Cx\|_\infty$ vs. distance to the nearest training point $\|x - x_d\|_\infty$;
    and (bottom) median $\ell_\infty$ tracking error for 200 rollouts using the (a) linear map on synthetic images,
	(b) SLAM and (c) CNN on CARLA dashboard images. Error bars show upper and lower quartiles.
}
\ifshort
\vspace{-2em}
\fi
\label{fig:error-profile-rollouts}
\end{figure}

We compare robust synthesis to the behavior of \emph{nominal controllers} that do not take into account the nonlinearity in the measurement model.
In particular, we compare the performance of naively synthesized LQG and $\mathcal L_1$ optimal controllers with controllers designed with the robustness condition~\eqref{eq:xetahat-bound}.
LQG is a standard control scheme that explicitly separates state estimation (Kalman Filtering) from control (LQR control), and is emblematic of much of standard control practice.
$\mathcal L_1$ optimal control minimizes worst case state deviation and control effort by modeling process and sensor errors as $\ell_\infty$ bounded adversarial processes.
For further discussion and details, refer to
\ifshort
our full technical report~\citep{full_version}.
\else
Section~\ref{sec:optimal_control_background} and Appendix~\ref{app:experiment-details}.
\fi
The bottom panels of Figure~\ref{fig:error-profile-rollouts} show that the robustly synthesized controllers remain within a bounded neighborhood around the training data.
On the other hand, the nominal controllers drive the system away from the training data, leading to a failure of the perception and control loop.
We note that although visual odometry may not satisfy smoothness assumptions 
when the feature detection and matching fails, we nevertheless observe safe system behavior, suggesting that using our robust controller, no such failures occur.

\ifshort
\vspace{-1em}
\fi
\section{Conclusion}\label{sec:conclusions}
\ifshort
\vspace{-0.5em}
\fi
Though standard practice is to treat the output of a perception module as an ordinary signal, we have demonstrated both in theory and experimentally that accounting for the inherent uncertainty of perception based sensors can dramatically improve the performance of the resulting control loop. Moreover, we have shown how to quantify and account for such uncertainties with tractable data-driven safety guarantees. We hope to extend this study to the control of more complex systems, and to apply this framework to standard model-predictive control pipelines which form the basis of much of contemporary control practice.

We further hope to highlight the challenges involved in adapting learning-theoretic notions of generalization to the setting of controller synthesis.
First note that if we collect data using one controller, and then use this data to build a new controller, there will be a distribution shift in the observations seen between the two controllers. Any statistical generalization bounds on performance must necessarily account for this shift. Second, from a more practical standpoint, most generalization bounds require knowing instance specific quantities governing properties of the class of functions we use to fit a predictor. Hence, they will include constants that are not measurable in practice. This issue can perhaps be mitigated using some sort of bootstrap technique for post-hoc validation. However, we note that the sort of bounds we aim to bootstrap are worst case, not average case. Indeed, the bootstrap  typically does not even provide a consistent estimate of the maximum of independent random variables, see for instance \cite{bickel1981some}, and Ch 9.3 in \cite{chernick2011bootstrap}. Other measures such as conditional value at risk require billions of samples to guarantee five 9s of reliability~ \citep{rockafellar2000optimization}. We highlight these issues only to point out that adapting statistical generalization to robust control is an active area with many open challenges to be considered in future work.

\subsubsection*{Acknowledgments}
This work is generously supported in part by ONR awards N00014-17-1-2191, N00014-17-1-2401, and N00014-18-1-2833, the DARPA Assured Autonomy (FA8750-18-C-0101) and Lagrange (W911NF-16-1-0552) programs, a Siemens Futuremakers Fellowship, and an Amazon AWS AI Research Award.
SD and VY are supported by an NSF Graduate Research Fellowship under Grant No. DGE 1752814.

{\small 
\bibliographystyle{plainnat}
\bibliography{references}}

\ifshort
\else
\newpage
\appendix

\section{Linear optimal control}\label{app:optimal_control}

\begin{table}[t]
\small
\centering
\begin{tabular}{|c|c|c|c|}
\cline{1-4}
\textbf{Name} & \textbf{Disturbance class} & \textbf{Cost function} & \textbf{Use cases}   \\ \cline{1-4}
 LQR/$\mathcal{H}_2$ & \makecell{$\E \nu = 0,$ \\ $\E \nu^4 < \infty$, $\nu_k$ i.i.d.}& $\displaystyle\E_\nu \left[\lim_{T\to\infty}\sum_{k=0}^T\frac{1}{T}x_k^\top Q x_k + u_k^\top R u_k \right] $& \makecell{Sensor noise, \\ aggregate behavior,\\ natural processes}   \\ \cline{1-4}
 $\mathcal{H}_\infty$ & $\norm{\nu}_{pow} \leq 1$ &  $\displaystyle\sup_{\norm{\nu}_{pow}\leq 1} \lim_{T\to\infty} \frac{1}{T}\sum_{k=0}^T  x_k^\top Q x_k + u_k^\top R u_k $ & \makecell{Modeling error,\\ energy/power \\constraints}   \\ \cline{1-4}
 $\mathcal{L}_1$ & $\norm{\nu}_\infty \leq 1$&\makecell{$ \displaystyle\sup_{\norm{\nu}_\infty\leq 1, k\geq 0}\bignorm{\begin{matrix}Q^{1/2}x_k \\ R^{1/2}u_k\end{matrix}}_\infty  $ \\} & \makecell{Real-time safety \\constraints,  actuator \\ saturation/limits}    \\ \cline{1-4}
\end{tabular}
\caption{Different noise model classes induce different cost functions, and can be used to model different phenomenon, or combinations thereof \citep{zhou1996robust,dahleh19871}.}
\label{tab:disturbances}
\end{table}

Table~\ref{tab:disturbances} shows several common cost functions that arise from different system desiderata and different classes of disturbances and measurement errors $\nu := (w,e)$. 
We recall the power norm,\footnote{The power-norm is a semi-norm on $\ell_\infty$, as $\norm{\tf x}_{pow} = 0$ for all $\tf x \in \ell_2$, and consequently is a norm on the quotient space $\ell_\infty/\ell_2$. It can be used to define the $\hinf$ system norm (see \cite{you2014h} and references therein).} defined as 
\[\norm{\tf x}_{pow} :=  \sqrt{ \lim_{T\to\infty}\frac{1}{T}\sum_{k=0}^T \twonorm{x_k}^2}\:.\]

We now recall some examples from linear optimal control in the partially observed setting. 

\begin{example}[Linear Quadratic Regulator]\label{ex:LQR}
Suppose that the cost function is given by 
\[
c(\tf x, \tf u) = \mathbb E_\nu \left[ \lim_{T\to \infty} \frac{1}{T} \sum_{k=0}^T x_k^\top Q x_k + u_k^\top R u_k \right],
\]
for some user-specified positive definite matrices $Q$ and $R$, $w_k \iid  \calN(0,I)$, $H=I$, and that the controller is given full information about the system, i.e., that $C=I$ and $e_k = 0$ such that the measurement model collapses to $y_k = x_k$.  Then the optimal control problem reduces to the familiar Linear Quadratic Regulator (LQR) problem
\begin{equation}
\begin{array}{rl}
\displaystyle\minimize_{\{\pi\}}&\E_w \left[\displaystyle\lim_{T\to\infty}\frac{1}{T}\sum_{k=0}^T  x_k^\top Q x_k + u_k^\top R u_k \right]\\
\mathrm{subject to} & x_{k+1} = A x_k + B u_k + w_k \\
& u_k = \pi(x_{0:k}),
\end{array}
\label{eq:lqr-control}
\end{equation}
For stabilizable $(A,B)$, and detectable $(A,Q)$, this problem has a closed-form stabilizing controller based on the solution of the discrete algebraic Riccati equation (DARE)~\citep{zhou1996robust}.
This optimal control policy is linear, and given by
\begin{equation}
u^{\mathrm{LQR}}_k = -(B^\top P B + R)^{-1} B^\top P A x_k =: K_{\mathrm{LQR}} x_k,
\end{equation}
where $P$ is the positive-definite solution to the DARE defined by $(A,B,Q,R)$.
\end{example}

\begin{example}[Linear Quadratic Gaussian Control]\label{ex:LQG}
Suppose that we have the same setup as the previous example, but that now the measurement is instead given by \eqref{eq:output-measurement} for some $C$ such that the pair $(A,C)$ is detectable, and that $e_k \iid \calN (0,I)$. 
Then the optimal control problem reduces to the Linear Quadratic Gaussian (LQG) control problem, the solution to which is:
\begin{equation}
u^{\mathrm{LQG}}_k = K_{\mathrm{LQR}} \hat{x}_k,
\end{equation}
where $\hat{x}_k$ is the Kalman filter estimate of the state at time $k$. The steady state update rule for the state estimate is given by
\[\hat x_{k+1} = A\hat x_k + B u_k + L_{\mathrm{LQG}}(y_{k+1} - C(A\widehat x_k + Bu_k))\]
for filter gain $L_{\mathrm{LQG}} = -PC^\top (C P C^\top + I)^{-1} $
where $P$ is the solution to the DARE defined by $(A^\top,C^\top,I,I)$.
This optimal output feedback controller satisfies the 
\emph{separation principle}, meaning that the optimal controller $K_{\mathrm{LQR}}$ is computed independently of the optimal estimator gain $L_{\mathrm{LQG}}$.
\end{example}

These first two examples are widely known due to the elegance of their closed-form solutions and the simplicity of implementing the optimal controllers. 
However, this optimality rests on stringent assumptions about the distribution of the disturbance and the measurement noise. We now turn to an example, first introduced in Example~\ref{ex:ref_tracking}, for which disturbances are adversarial and the separation principle fails.

Consider a waypoint tracking problem where it is known that both the distances between waypoints and sensor errors are instantaneously $\ell_\infty$ bounded, and we want to ensure that the system remains within a bounded distance of the waypoints.  In this setup, the $\mathcal{L}_1$ optimal control problem is most natural, and our cost function is then
\[
c(\tf x, \tf u) = \sup_{\substack{\norm{r_{k+1}-r_{k}}_\infty\leq 1,\\
\|e_k\|_\infty\leq 1, k\geq 0}}\bignorm{\begin{matrix}Q^{1/2}(x_k-r_k) \\ R^{1/2}u_k\end{matrix}}_\infty,
\]
for some user-specified positive definite matrices $Q = \mathrm{diag}{\frac{1}{q^2_{i}}}$ and $R = \mathrm{diag}{\frac{1}{r^2_{i}}}$.  Then if the optimal cost is less than 1, we can guarantee that $|x_{i,k} - r_{i,k}| \leq q_i$ and $|u_{i,k}| \leq r_i$ for all possible realizations of the waypoint and sensor error processes. 
Considering the one-step lookahead case,\footnote{
	A similar formulation exists for any $T$-step lookahead of the reference trajectory.
} we can define
the augmented state $\xi_k = [x_k-r_k; r_k]$ %
and pose the problem with bounded disturbances
$w_k=r_{k+1}-r_{k}$. We can then formulate the following $\mathcal{L}_1$ optimal control problem,
\begin{equation}
\begin{array}{rl}
\displaystyle\minimize_{\{\pi\}}&\sup_{\norm{\nu}_\infty\leq 1, k\geq 0}\bignorm{\begin{matrix}\bar Q^{1/2}\xi_k \\ R^{1/2}u_k\end{matrix}}_\infty\\
\text{subject to} & \xi_{k+1} = \bar A \xi_k + \bar B u_k + \bar H w_k\:, ~~y_k = \bar C \xi_k + \eta_k\\
& u_k = \pi(y_{0:k}),
\end{array}
\label{eq:l1-control}
\end{equation}
where
	\[~~\bar A = \begin{bmatrix}A &0 \\0& I\end{bmatrix},~~ \bar B = \begin{bmatrix} B\\ 0\end{bmatrix},~~\bar C = \begin{bmatrix} C & 0\end{bmatrix},~~ \bar H = \begin{bmatrix} 0 \\ I\end{bmatrix} \]

This optimal control problem is an instance of $\lone$ robust control~\citep{dahleh19871}.
The optimal controller does not obey the separation principle, and as such, there is no clear notion of an estimated state.

\section{Experimental details} \label{app:experiment-details}

The robust SLS procedure we propose and analyze requires solving a finite dimensional approximation
to an infinite dimensional optimization problem, as $\rowtf$ and the corresponding constraints
\eqref{eq:sls_constraints} and objective function are infinite dimensional objects.
As an approximation, we restrict the system responses $\rowtf$
to be finite impulse response (FIR) transfer matrices of length $T=200$, i.e., we enforce that $\tf \Phi(T) = 0$. We then solve the resulting
optimization problem with MOSEK under an academic license~\citep{mosek}. 
More explicitly, we define $\vop(\tf F) := \rvecthree{F_0^\top}{\dots}{F_{T-1}^\top}^\top$
and $\vbar(\tf F) := \rvecthree{F_0}{\dots}{F_{T-1}}$, where $ F_t$ are the FIR 
coefficients of the system responses. We further define
\begin{equation}
\tf Z := \begin{bmatrix} Q^{1/2} & \\ & R^{1/2} \end{bmatrix}  
\fulltf
\begin{bmatrix}H\\\gamma I \end{bmatrix}.
\end{equation}

The SLS constraints~\eqref{eq:sls_constraints} and FIR condition are then enforced as

\begin{align}
\begin{split}
& \rvectwo{\vbar(\tfxw)}{0} - \rvectwo{0}{A\vbar(\tfxw)} = \rvectwo{0}{B\vbar(\tfuw) + \vbar(\tf I)}\\
& \rvectwo{\vbar(\tfxn)}{0} - \rvectwo{0}{A\vbar(\tfxn)} = \rvectwo{0}{B\vbar(\tfun)}\\
& \cvectwo{\vop(\tfxw)}{0} - \cvectwo{0}{\vop(\tfxw)A} = \cvectwo{0}{\vop(\tfxn)C + \vop(\tf I)}\\
& \cvectwo{\vop(\tfuw)}{0} - \cvectwo{0}{\vop(\tfuw)A} = \cvectwo{0}{\vop(\tfun)C)}\\
& \tfxw(T) = 0, \, \tfuw(T) = 0, \, \tfxn(T) = 0, \, \tfun(T) = 0.
\end{split} \label{eq:actual_constraint}
\end{align}

We then solve the following optimization problem 
\begin{align*}
\underset{\tf \Phi}{\minimize} \quad&\quad \mbox{cost}(\tf Z)\\
\st ~
&~ \eqref{eq:actual_constraint},~~\mbox{norm}(\tfxn) \leq \alpha,
\end{align*}
where the $\mathrm{cost}(\cdot)$ and $\mathrm{norm}(\cdot)$ operators are problem dependent.

For the $\mathcal{L}_1$ robust problem, both the cost function and robust norm constraint reduce to the $\ell_\infty \rightarrow \ell_\infty$
induced matrix norm for an FIR transfer response $\tf F$ with coefficients in $\R^{n \times m}$,
\[
\mbox{norm}_{\infty\rightarrow\infty}(\tf F) = \max_{i=1,\dots,n} \| \vbar(\tf F)_i \|_1,
\]
where $\vbar(\tf F)_i$ denotes the $i$th row of $\vbar(\tf F)$.

For the robust LQG problem, the cost function reduces to the Frobenius norm for an FIR cost transfer matrix $\tf Z$, i.e., 
\[\mathrm{cost}(\tf Z) = \norm{\vop(\tf Z)}_F^2 = \sum_{t=0}^{T} \mathrm{Tr}Z_t^\top Z_t. \]
The robustness constraint is the same $\ell_\infty \rightarrow \ell_\infty$ induced matrix norm as in the $\mathcal{L}_1$ control problem.

 \fi

\end{document}